\renewcommand{\epsilon}{\varepsilon}
\numberwithin{equation}{section}
\newtheoremstyle{thmlemcorr}{10pt}{10pt}{\itshape}{}{\bfseries}{.}{10pt}{{\thmname{#1}\thmnumber{ #2}\thmnote{ (#3)}}}
\newtheoremstyle{thmlemcorr*}{10pt}{10pt}{\itshape}{}{\bfseries}{.}\newline{{\thmname{#1}\thmnumber{ #2}\thmnote{ (#3)}}}
\newtheoremstyle{defi}{10pt}{10pt}{\itshape}{}{\bfseries}{.}{10pt}{{\thmname{#1}\thmnumber{ #2}\thmnote{ (#3)}}}
\newtheoremstyle{remexample}{10pt}{10pt}{}{}{\bfseries}{.}{10pt}{{\thmname{#1}\thmnumber{ #2}\thmnote{ (#3)}}}
\newtheoremstyle{ass}{10pt}{10pt}{}{}{\bfseries}{.}{10pt}{{\thmname{#1}\thmnumber{ A#2}\thmnote{ (#3)}}}
\theoremstyle{thmlemcorr}
\newtheorem{theorem}{Theorem}
\numberwithin{theorem}{section}
\newtheorem{lemma}[theorem]{Lemma}
\newtheorem{corollary}[theorem]{Corollary}
\newtheorem{proposition}[theorem]{Proposition}
\theoremstyle{thmlemcorr*}
\newtheorem{theorem*}{Theorem}
\newtheorem{lemma*}[theorem]{Lemma}
\newtheorem{corollary*}[theorem]{Corollary}
\newtheorem{proposition*}[theorem]{Proposition}
\newtheorem{problem*}[theorem]{Problem}
\newtheorem{conjecture*}[theorem]{Conjecture}
\theoremstyle{defi}
\newtheorem{definition}[theorem]{Definition}
\theoremstyle{remexample}
\newtheorem{remark}[theorem]{Remark}
\newtheorem{example}[theorem]{Example}
\theoremstyle{ass}
\newcommand{\Acal}{\mathcal{A}}
\newcommand{\Ecal}{\mathcal{E}}
\newcommand{\Mcal}{\mathcal{M}}
\newcommand{\Pcal}{\mathcal{P}}
\newcommand{\Scal}{\mathcal{S}}
\newcommand{\Ibb}{\mathbb{I}}
\DeclareMathOperator{\rank}{rank}
\DeclareMathOperator{\trace}{tr}
\newcommand{\abs}[1]{|#1|}
\newcommand{\dd}{\;\mathrm{d}}
\newcommand{\N}{\mathbb{N}}
\newcommand{\R}{\mathbb{R}}
\newcommand{\eps}{\epsilon}
\newcommand{\cofoff}{\cof_{\rm off}\,}
\newcommand{\cofdiag}{\cof_{\rm diag}\, }
\newcommand{\cof}{{\rm cof\,}}
\newcommand{\EA}{\begin{equation}}
\newcommand{\EE}{\end{equation}}
\newcommand{\DMA}{\begin{displaymath}}
\newcommand{\DME}{\end{displaymath}}
\def\XXint#1#2#3{{\setbox0=\hbox{$#1{#2#3}{\int}$}
\vcenter{\hbox{$#2#3$}}\kern-.5\wd0}}
\title[Characterizations of symmetric polyconvexity]{Characterizations of symmetric polyconvexity}
\author{Omar Boussaid}
 \address{Faculty of Exact and Computer Sciences, Hassiba Ben Bouali University, Ouled Fares,  02180 Chlef, Algeria}
  \email{o.boussaid@univhb-chlef.dz}
\author{Carolin Kreisbeck}
\address{Mathematical Institute, Utrecht University, Postbus 80010, 3508 TA Utrecht, The Netherlands}
\email{c.kreisbeck@uu.nl}
\author{Anja Schl\"omerkemper}
 \address{Institute of Mathematics, University of W\"urzburg, Emil-Fischer-Str. 40, 97074 W\"urzburg, Germany}
 \email{anja.schloemerkemper@mathematik.uni-wuerzburg.de}
\begin{document}


\maketitle

\thispagestyle{empty}
\begin{abstract}
Symmetric quasiconvexity plays a key role for energy minimization in geometrically linear elasticity theory. Due to the complexity of this notion, a common approach is to retreat to necessary and sufficient conditions that are easier to handle. 
This article focuses on symmetric polyconvexity, which is a sufficient condition. We prove a new characterization of symmetric polyconvex functions in the two- and three-dimensional setting, and use it to investigate relevant subclasses like symmetric polyaffine functions and symmetric polyconvex quadratic forms. In particular, we provide an example of a symmetric rank-one convex quadratic form in 3d that is not symmetric polyconvex. The construction takes the famous work by Serre from 1983 on the classical situation without symmetry as inspiration. Beyond their theoretical interest, these findings may turn out useful for computational relaxation and homogenization. 
\vspace{8pt}

\noindent\textsc{MSC (2010):} 26B25, 49J45, 74B05.%
\vspace{8pt}

\noindent\textsc{Keywords:} generalized notions of (symmetric) convexity, polyconvexity, quadratic forms, linearized elasticity
\vspace{8pt}

\noindent\textsc{Date:} \today.
\end{abstract}


\section{Introduction}\label{sec:introduction} 

Variational models based on energy minimization principles are known to yield good descriptions of the elastic materials in nonlinear elasticity (so-called hyperelastic materials), and have inspired new mathematical developments in the calculus of variations over the last decades. \color{black}  Typically, one encounters elastic energies of the form
\begin{align}\label{elasticenergy1}
E(v)= \int_{\Omega} W(\nabla v)\dd{x},
\end{align}
where $\Omega\subset \R^d$, $d=2,3$, is a bounded domain representing the reference configuration of the elastic body, $v:\Omega\to \R^d$ is the deformation of the body, and $W:\R^{d\times d}\to \R$ an elastic energy density. Depending on the specific model,~\eqref{elasticenergy1} can be complemented with external force terms and/or the class of admissible deformations be restricted by boundary conditions. As is well-known in continuum mechanics, mechanically relevant energy densities cannot be convex due to incompatibility with the concept of frame-indifference, see e.g.~\cite{Cia88}. The weaker notion of polyconvexity, which was introduced by Ball in 1977~\cite{Bal77}, however, has turned out to be particularly suitable here, since $(i)$ existence of weak solutions can be shown (using the direct method in the calculus of variations) and $(ii)$ there exist realistic models (involving orientation preservation and avoiding infinite compression) that have polyconvex energies. Explicit polyconvex energies for certain isotropic and anisotropic materials were derived in e.g.~\cite{HarNef03,Mie05,Ste03}.
We recall that a function $\R^{d\times d}\to \R$ is called polyconvex if it depends in a convex way on its minors;
 in 3d, second and third order minors correspond to the matrix of cofactors and the determinant, respectively, while the second order minor in 2d is just the determinant.

In this article, we investigate functions that are polyconvex when composed with the linear projection of $\R^{d\times d}$ onto the subspace of symmetric matrices $\Scal^{d\times d}$, that is, $f:\Scal^{d\times d}\to \R$ such that $\R^{d\times d}\ni F\mapsto f(F^s)$ is polyconvex. We call such functions {\em symmetric polyconvex}, see Definition~\ref{def:sqc} for more details. 
This notion has applications in the geometrically linear theory of elasticity, which results from nonlinear elasticity theory by replacing the requirement of frame-indifference with the assumption that the elastic energy density is invariant under infinitesimal rotations, see e.g.,~\cite{Bha93, Bha03, Cia88}. In this case, the energy densities depend only on the symmetric part of the deformation gradient, or on linear strains $e(u)= \frac{1}{2}(\nabla u + (\nabla u)^T)$, where $u:\Omega\to \R^d$, $u(x) = v(x) - x$, is the elastic displacement field. That is, 
\begin{align}\label{elasticenergy2}
\Ecal(u)= \int_{\Omega} f(e(u))\dd{x},
\end{align}
with a density $f:\Scal^{d\times d}\to \R$. Here, we are particularly interested in the case when $f$ is symmetric polyconvex. The overall aim is to identify necessary and sufficient conditions for symmetric polyconvexity in 2d and 3d. 
As we will discuss further below, these conditions will facilitate finding explicit symmetric polyconvex functions that can serve as interesting energy densities in continuum mechanics or that provide lower bounds in the relaxation of models for materials with microstructures. Moreover, a deeper understanding of symmetric polyconvexity may be useful in further applications of the translation method.\\

Next, let us explain in more detail our new mathematical results, which were
inspired by the following examples of functions defined on $\Scal^{d\times d}$: 
\begin{itemize}
 \item[$(i)$] $\eps\mapsto \det \eps$ is not symmetric polyconvex in $d=2,3$;
 \item[$(ii)$] $\eps\mapsto - \det \eps$ is symmetric polyconvex in $d=2$, but not in $d=3$;
 \item[$(iii)$] $\eps\mapsto -(\cof\eps)_{ii}$, $i=1,2,3$ is symmetric polyconvex in $d=3$, while $\eps \mapsto \cof \eps$ is not.
\end{itemize}
Note that this is different from the classical setting, where $\R^{d\times d}\ni F\mapsto \pm \det F$ is polyconvex in any dimension, as are all minors. We discuss these functions in Example~\ref{ex:pmdet}, and give further examples in  Remarks~\ref{rem:nonuniqueness}\,b) and \ref{remrepre}.

We show that symmetric polyconvexity in 2d and 3d can be characterized as follows: Any symmetric polyconvex function in 2d corresponds exactly to a convex function of all minors that is non-increasing with respect to the determinant (see Theorem~\ref{prop:charpoly2d}). In particular, this includes $(ii)$ above and elucidates why $(i)$ is not symmetric polyconvex. 
For 3d, we prove in Theorem~\ref{representation} that any symmetric polyconvex function can be  represented as a convex function of first and second order minors (hence, no dependence on the determinant) whose subdifferential with respect to its cofactor variable is negative semi-definite. Notice that this result is in correspondence with example $(iii)$ above. 
The difficulty here lies in identifying these characterizations. The proofs build on monotonicity properties of convex functions expressed in terms of their (partial) subdifferentials (cf.~Lemmas~\ref{lem:aux} and~\ref{lem:aux2}), as well as on some standard tools from convex analysis, properties of semi-convex functions in the classical setting, and on basic algebraic relations for minors.  

Moreover, these findings can be used to investigate the two subclasses of symmetric polyconvex quadratic forms and symmetric polyaffine functions. Indeed, it is not hard to see that the latter, i.e.,~functions $f:\Scal^{d\times d}\to \R$ such that $f$ and $-f$ are symmetric polyconvex, are always affine as we outline in Subsection~\ref{subsec:polyaffine} and Proposition~\ref{cor:polyaffine}. Note that this stands in contrast with the classical case, where for instance the function $\R^{d\times d}\ni F\mapsto \det F$ is polyaffine, but not affine. Hence, in the symmetric setting, there are no non-trivial Null-Lagrangians.
Regarding the class of symmetric polyconvex quadratic forms we give explicit characterizations, both for $d=2$ (Proposition~\ref{prop:quadforms2d}) and $d=3$ (Proposition~\ref{theoremquad}). More precisely, if $f:\Scal^{d\times d}\to \R$ is symmetric polyconvex, then 
\begin{align}\label{char23b}
f(\eps) = h(\eps) - \alpha \det \eps\quad \text{and} \quad f(\eps) = h(\eps) - A: \cof \eps 
\end{align}
for 2d and 3d, respectively, where $h:\Scal^{d\times d}\to \R$ is convex, $\alpha>0$, and $A\in \Scal^{3\times 3}$ is positive semi-definite. The case $d=2$ can be viewed as a refinement of the result by Marcellini in~\cite{Mar84} from the classical to the symmetric framework. For a related statement in the three-dimensional classical setting we refer to \cite[p.~192]{Dac08}. 

While in two dimensions, symmetric rank-one convexity and symmetric polyconvexity are equivalent for quadratic forms as a consequence of the corresponding well-known result for classical quadratic forms (see e.g.~\cite[Theorem~5.25]{Dac08}), we prove that this is not true for $d=3$.  
For quadratic forms on $\R^{3\times 3}$, this has been known since early work by Terpstra~\cite{Ter39}, which Serre in~\cite{Ser83} underpinned with an explicit example. Precisely, he showed that there exists $\eta>0$ such that
\begin{align}\label{Serre}
F\mapsto (F_{11} - F_{23}-F_{32})^2 + (F_{12}-F_{31} + F_{13})^2 + (F_{21} -F_{13} - F_{31})^2 + F_{22}^2 + F_{33}^2 -\eta |F|^2
\end{align}
is rank-one convex, but not polyconvex,  cf.~also~\cite[p.\,194]{Dac08}. 
More recently, Harutyunyan and Milton \cite{HaM15} gave another example,
namely
\begin{align}\label{Milton}
F\mapsto F_{11}^2+F_{22}^2+F_{33}^2 + F_{12}^2 + F_{23}^2 + F_{31}^2  - 2(F_{11}F_{22}  +F_{22}F_{33} + F_{33}F_{11}).
\end{align}
There, the proof of non-polyconvexity exploits critically that $F_{13}, F_{21}$ and $F_{32}$ do not appear in the formula, see \cite[Proof of Theorem~1.5\ $ii)$]{HaM15}. How to adapt the examples in \eqref{Serre} and \eqref{Milton} to the symmetric setting is not immediately obvious due to the kind of dependence on the off-diagonal entries. 
As one of the main results of this article, we provide a symmetric variant of Serre's example: 
Theorem~\ref{theo:counterexample} states the existence of $\eta>0$ such that the quadratic form $f:\Scal^{3\times 3}\to \R$ given by 
\begin{align}\label{ourexample}
f(\eps)= (\eps_{13}-\eps_{23})^2+(\eps_{12}-\eps_{13})^2+(\eps_{12}-\eps_{23})^2 + \eps_{11}^2+\eps_{22}^2+\eps_{33}^2-\eta|\eps|^2\end{align}
is symmetric rank-one convex, but not symmetric polyconvex. The proof that this function is not symmetric polyconvex is based on the $3$d-characterization in~\eqref{char23b} 
and on a careful study of the minimizers of  $f + \eta |\eps|^2$ in the set of compatible matrices with unit norm (see~Lemma~\ref{lem:properties_eps0} for the details).\\

We have organized the paper as follows. In Section~\ref{sec:notions_convexity}, we recall the common generalized notions of convexity in the symmetric setting and review selected results from the literature regarding their characterization, properties and relations. The section is concluded with the above mentioned simple motivating examples. After some notational remarks and preliminaries in Section~\ref{sec:preliminaries}, we address characterizations of symmetric polyconvexity in $2$d in Section~\ref{sec:2d} and in $3$d in Section~\ref{sec:3d}. Moreover, we consider symmetric polyconvex quadratic forms in Subsections~\ref{sec:spc2} and \ref{sec:spc3}, where we also present the proof of Theorem~\ref{theo:counterexample}. Finally, symmetric polyaffine functions are the topic of Subsections~\ref{subsec:polyaffine} and \ref{subsec:polyaffine3}. In the remainder of the introduction, we comment further on the relevance of the notion of (symmetric) polyconvexity in the calculus of variations and elasticity theory.\\

Polyconvexity is a sufficient condition for quasiconvexity, which has been the subject of intensive investigation since its introduction by Morrey~\cite{Mor52} in 1952. It constitutes the central concept for the existence theory of vectorial integral problems in the calculus of variations, generalizing the notion of convexity to multi-dimensional variational problems.
If we consider the integral functional $E$ in \eqref{elasticenergy1} with suitable assumptions on the integrand $W$ and the space of deformations, quasiconvexity of $W$ is necessary and sufficient  for weak lower semicontinuity of $E$. Along with coercivity, quasiconvexity is thus a key ingredient for proving existence of minimizers of $E$ (e.g.~among all deformations with given boundary values) via the direct method in the calculus of variations, see e.g.\ \cite{Bal77,BeK17,Cia88,Dac08}. As for
weak lower semicontinuity of the functional $\Ecal$ in the geometrically linear theory of elasticity, cf.~\eqref{elasticenergy2}, symmetric quasiconvexity of the elastic energy density $f$ takes over the role of quasiconvexity of $W$~\cite{BFT00, Ebo00, FoM99}. This means that $f$ satisfies  Jensen's inequality for all symmetrized gradient test fields, or equivalently that the function is quasiconvex when composed with the linear projection of $\R^{d\times d}$ to symmetric matrices, cf.~Proposition~\ref{theo:characterization}. 
For a further discussion on the connections between linearized (small-strain) and nonlinear (finite-strain) elasticity theory
we refer the reader e.g.~to \cite{BaJ87, Bha93, DNP02, Koh90}.

Despite its importance, the notion of quasiconvexity is still not fully understood in all its facets due to its complexity. 
A common approach is therefore to retreat to the weaker and stronger conditions like rank-one convexity and polyconvexity, as they are easier to deal with and yet allow to make useful conclusions about quasiconvex functions~\cite{Dac08, Mul99}. The study of generalized notions of convexity for functions with specific properties has helped to gain valuable new insight and to advance the field. Relevant classes include one-homogeneous functions~\cite{DaH96, Mul92}, functions obtained by compositions with transposition~\cite{Kru99, Mul00}, or functions with different types of invariances, such as isotropic functions~\cite{Dac08,DaK93, MGN17, Mie05, Ros98, Sil99, Sil16}, quadratic forms with linearly elastic cubic, cyclic and axis-reflection symmetry~\cite{HaM15}, and functions of linear strains~\cite{ChB08,ChS13,Koh91,Pei13,Zha02,Zha03a}. 
In this spirit, the characterizations of symmetric polyconvexity proved in this article contribute to a deeper understanding of quasiconvex functions that are invariant under symmetrization. 

Even though minimization problems with integral functionals whose density is not (symmetric) quasiconvex may in general not admit solutions, they are highly relevant in many applications as they allow to model oscillation effects, like microstructure formation in materials, see e.g.~\cite{Mul99}.  
The mathematical task is then to describe the effective behavior of the model by analyzing minimizing sequences or low energy states. This comes down to relaxation, i.e.~to finding the lower semi-continuous envelope of the given functional. 
In analogy to the relaxation of integral functionals in the nonlinear setting (see e.g.~\cite{Dac08,Mul99}), this is achieved in the symmetric setting by the symmetric quasiconvexification of the integrand.
The symmetric quasiconvex envelope of a function $f$, i.e.~the largest symmetric quasiconvex function not larger than $f$, can be represented as 
\begin{align}\label{fsqc}
f^{\rm sqc}(\eps)=\inf_{\varphi\in C^\infty_c((0,1)^d;\R^d)}\int_{(0,1)^d} f(\eps + e(\varphi))\dd{x}, \qquad \eps\in \Scal^{d\times d},\end{align}
see e.g.~\cite[Proposition~2.1]{Zha04}.
We point out that explicit calculations of $f^{\rm sqc}$ tend to be rather challenging, since they require to solve the infinite-dimensional minimization problem in~\eqref{fsqc} for every symmetric matrix $\eps$.
A common strategy is to approach this problem 
by searching for suitable upper and lower bounds, typically in the form of (symmetric) polyconvex and (symmetric) rank-one convex envelopes, cf.\ e.g.\ \cite{ChS13, CoD16, CDK13, CoT05, FHS16, GMH03, LDR95}. In case of matching bounds and non-extended valued densities, one obtains even an exact relaxation formula.
Aside from applications in linearized elasticity, the characterizations of symmetric polyconvexity provided in this paper are potentially useful for relaxation problems arising from elasto-plastic models or in the theory of liquid crystals.

Another technique from homogenization and optimization theory, called the translation method, has also proven itself very successful in deriving good lower estimates on quasiconvex envelopes. For an introduction and a historical overview we refer to \cite[Chapter~8]{Che12}, for publications related to elasticity theory see~e.g.~\cite{CheBha08,Fir91,Koh91,Pei13} and the references therein. 
In our symmetric setting, the translation method can be briefly summarized as follows. We observe that $f = f- q + q \geq (f-q)^c + q$ for any $f,q:\Scal^{d\times d}\to \R$, where the superscript ${\rm c}$ stands for the convex envelope. If the so-called translator $q$ is symmetric quasiconvex, $(f-q)^c + q$ is symmetric quasiconvex. Hence, 
\begin{align}\label{translation}
f^{\rm sqc}\geq (f-q)^{\rm c} +q.
\end{align}
Optimizing the right-hand side of~\eqref{translation} over a subclass of symmetric quasiconvex functions provides a lower bound on $f^{\rm sqc}$. We remark that if one took the supremum over the totality of all symmetric quasiconvex functions $q$, for which explicit representations are not available, though, the method would be exact. Hence, a good choice of the class of translators in the sense of finding the balance between generality and explicitness determines the effectiveness of the method. There are two natural options to be used here: $(i)$ specific symmetric polyconvex functions, which, thanks to our characterizations in Theorems~\ref{prop:charpoly2d} and~\ref{theo:characterization3d}, can be easily constructed, and $(ii)$ symmetric rank-one convex quadratic forms, cf.\ also ~\cite{Zha03a}.

Our characterization of symmetric polyconvex  (and thus rank-one convex) quadratic forms in 2d (cf.~\eqref{char23b} or Proposition~\ref{prop:quadforms2d}) yields an explanation of why the translator $\eps\mapsto -\det\eps$ is often a good choice. It was used for example in the 2d setting of \cite{CheBha08} in the derivation of a relaxation formula for two-well energies with possibly unequal moduli. 
Indeed, if we rewrite the right hand side of \eqref{translation} as $(f-q)^{\rm c} - h^{\rm c} + h + q \geq (f- h - q)^{\rm c}  + h + q$ with a convex quadratic function $h:\Scal^{2\times 2}\to \R$, we see in view of the first equation in~\eqref{char23b} that working with just $\eps\mapsto -\alpha\det \eps$ for $\alpha>0$ as a translator is equivalent to using all symmetric rank-one convex quadratic forms. 
The second equation in~\eqref{char23b} indicates that the analogous observation is true in three dimensions for $\eps\mapsto -A:\cof \eps$ with $A\in \Scal^{3\times 3}$ positive semi-definite. Translators of this type play a key role in the derivation of the bounds in~\cite{CheBha08, Pei13}. Our characterization result hence provides structural insight into the choice of translators in the above-mentioned literature. 

In the classical setting, Firoozye \cite{Fir91} showed that a translation bound optimized over rank-one convex quadratic forms and Null-Lagrangians is at least equally good as polyconvexification, and even strictly better for some three-dimensional functions. His proof of this latter statement is based on Serre's example in \eqref{Serre}. 
Our example~\eqref{ourexample} in the 3d symmetric setting clearly implies that, in contrast to 2d, considering symmetric rank-one convex quadratic forms as translators will in general give better bounds than using just symmetric polyconvex ones. \color{black} Whether there are situations when combining symmetric rank-one convex quadratic forms with other symmetric polyconvex functions leads to improved results remains an open question for future research; notice that we do not have any non-trivial Null-Lagrangians at hand in the symmetric setting, cf.\ Proposition~\ref{cor:polyaffine}. 

%
\section{Different notions of symmetric semi-convexity}\label{sec:notions_convexity}

When speaking of semi-convexity, we will always refer to one of the following notions of generalized convexity: quasiconvexity, polyconvexity and rank-one convexity.
Let us briefly recall that a function $f:\R^{d\times d}\to \R$ is called quasiconvex if it satisfies Jensen's inequality for all gradient test fields, precisely,
\begin{align*}
f(F)\leq \inf_{\varphi \in C^\infty_c((0,1)^d;\R^d)}\int_{(0,1)^d} f(F+\nabla \varphi)\dd{x} \qquad \text{for all $F\in \R^{d\times d}$,}
\end{align*}
assuming the integrals on the right-hand side exists. 
A function $f:\R^{d\times d}\to \R$ is
polyconvex if it can be written in terms of a convex function of its minors. Finally, $f:\R^{d\times d}\to \R$ is a rank-one convex function, if it is convex along all rank-one lines $t\mapsto F+t a\otimes b$ for $F\in \R^{d\times d}$ and $a, b\in \R^d$, where $(a\otimes b)_{ij} = a_ib_j$ for $i,j=1, \ldots, d$. 
For more details, see e.g.~the standard work by Dacorogna~\cite{Dac08}.

Here, we are interested in functions that are independent of the skew-symmetric part of its variables $F\in \R^{d\times d}$, that is, functions that depend only on the symmetric part of $F$ as motivated by geometrically linear elasticity theory, cf.~Section~\ref{sec:introduction}. As documented there, also here the semi-convexity notions are of interest, i.e.,  symmetric quasi-, poly- and rank-one convex functions. 
The special class of semi-convex functions that are independent of skew-symmetric parts motivates the concept of  symmetric semi-convexity.  According to the following definition, we call a function defined on the space of $\Scal^{d\times d}$ symmetric semi-convex, if its natural extension to all matrices in $\R^{d\times d}$ is semi-convex in the conventional sense, cf.~work by Zhang~\cite{Zha02, Zha04}, where symmetric semi-convexity is called semi-convexity on linear strains.
\begin{definition}[Symmetric semi-convex functions]\label{def:sqc}
A function $f:\Scal^{d\times d}\to \R$ is symmetric semi-convex, if the function 
\begin{align*}
\tilde{f}= f\circ \pi_d:\R^{d\times d}\to \R
\end{align*} 
is semi-convex. Here, $\pi_d( F) = \tfrac12 (F + F^T) = F^s$, $F \in \R^{d\times d}$, is the orthogonal projection onto the subspace of symmetric matrices.  
\end{definition}
Note that in particular, $\tilde f(F) = f(F^s) = \tilde f(F^s) = \tilde f(F^T)$ for any $F\in \R^{d\times d}$, i.e., $\tilde f$ is invariant under symmetrization. 
As an aside, we mention that a corresponding definition of symmetric convex functions is possible. By linearity of $\pi_d$, a function is symmetric convex if and only if it is convex.

Next we will collect and review some classical, as well as more recent, results in the context of symmetric semi-convex functions. The following characterizations of symmetric quasi- and rank-one convexity for general dimensions $d$ are straightforward to show and appear to be well-known, see e.g.~\cite{Ebo00, Zha02}. 
\begin{proposition}\label{theo:characterization}
Let $f:\Scal^{d\times d}\to\R$. Then 
\begin{itemize}
\item[$i)$] $f$ is symmetric quasiconvex if and only if for every $\eps\in \Scal^{d\times d}$,
\begin{align}\label{theo_char1}
f(\eps)\leq \inf_{\varphi\in C^{\infty}_c((0,1)^d;\R^d)}\int_{(0,1)^d} f(\eps+(\nabla \varphi)^s)\dd{x}. 
\end{align} 
\item[$ii)$] $f$ is symmetric rank-one convex if and only if
\begin{align}\label{theo_char2}
f(\lambda \eps+(1-\lambda)\eta)\leq \lambda f(\eps)+(1-\lambda)f(\eta)
\end{align}
for all $\lambda\in (0,1)$ and $\eps, \eta\in \Scal^{d\times d}$ compatible, i.e.~$\eps-\eta = a\odot b := \tfrac{1}{2}(a\otimes b + b\otimes a)$ for some $a, b \in \R^{d}$. \\
Equivalently, $t\mapsto f(\eps + t a\odot b)$ is convex for any $\eps \in \Scal^{d\times d}$ and any $a,b \in \R^d$. 
\end{itemize}
\end{proposition}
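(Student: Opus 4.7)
The plan is to derive both characterizations directly from Definition~\ref{def:sqc} by unpacking the classical definitions of quasiconvexity and rank-one convexity for $\tilde f = f\circ \pi_d$ on $\R^{d\times d}$. Everything rests on two elementary identities: $\pi_d(\eps)=\eps$ for every $\eps\in\Scal^{d\times d}$, and $\pi_d(\nabla\varphi)=(\nabla\varphi)^s$ for any $\varphi\in C^\infty_c((0,1)^d;\R^d)$; as a special case, $\pi_d(a\otimes b)=a\odot b$ for all $a,b\in\R^d$.

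For part $(i)$, the forward direction is immediate: if $\tilde f$ is quasiconvex, then applying its defining inequality at a symmetric argument $\eps$ and using $\tilde f(\eps)=f(\eps)$ together with $\tilde f(\eps+\nabla\varphi)=f(\eps+(\nabla\varphi)^s)$ yields~\eqref{theo_char1}. For the converse, given an arbitrary $F\in\R^{d\times d}$, I apply~\eqref{theo_char1} at $\eps=F^s$; the right-hand side is then precisely $\int_{(0,1)^d} \tilde f(F+\nabla\varphi)\dd x$, which gives classical quasiconvexity of $\tilde f$.

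Part $(ii)$ has two equivalent statements to establish, so I plan a short chain. Rank-one convexity of $\tilde f$ says that $t\mapsto \tilde f(F+t\,a\otimes b)$ is convex for every $F,a,b$. Choosing $F=\eps\in\Scal^{d\times d}$ immediately gives that $t\mapsto f(\eps + t\,a\odot b)$ is convex, which is the last equivalent formulation in $(ii)$. The chord inequality~\eqref{theo_char2} for compatible $\eps,\eta$ with $\eps-\eta=a\odot b$ then follows because the segment $\lambda \eps + (1-\lambda)\eta=\eta+\lambda\, a\odot b$ lies on such a one-dimensional line. Conversely, if~\eqref{theo_char2} holds, a short rescaling argument (apply the inequality to the pair $\eps+s\,a\odot b$ and $\eps+(s+r)\,a\odot b$, whose difference is again of the form $a'\odot b$) recovers one-dimensional convexity along every $a\odot b$-direction. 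Finally, extending to non-symmetric $F$ via $\tilde f(F+t\,a\otimes b)=f(F^s+t\,a\odot b)$ yields rank-one convexity of $\tilde f$.

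The argument is essentially routine bookkeeping around the symmetrization identity $\pi_d(a\otimes b)=a\odot b$; there is no substantive obstacle. The only point that deserves explicit care is the equivalence between the chord form~\eqref{theo_char2} and the one-dimensional convexity statement in $(ii)$, where the rescaling trick must be spelled out in order to avoid circularity.
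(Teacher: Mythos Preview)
Your proposal is correct and is precisely the routine verification the paper alludes to: the paper does not give its own proof of this proposition, merely stating that the characterizations ``are straightforward to show and appear to be well-known'' with references. Your bookkeeping around $\pi_d(a\otimes b)=a\odot b$ and $\tilde f(F+\nabla\varphi)=f(F^s+(\nabla\varphi)^s)$ is exactly what is needed, and the rescaling step for the equivalence between~\eqref{theo_char2} and the line-convexity formulation is handled correctly.
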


\begin{remark}
a) Notice that many works involving semi-convex functions defined on linear strains, such as~\cite{ChS13, PhR17, Ebo00,  Rin11}, take the characterizations of Proposition~\ref{theo:characterization} as a starting point and definition.

b) In~\cite{Sve92a}, a function $f:\Scal^{d\times d}\to \R$ is called quasiconvex, if for every $\eps\in \Scal^{d\times d}$,
\begin{align*}
f(\eps)\leq \inf_{\psi\in C_c^{\infty}((0,1)^d)} \int_{(0,1)^d} f(\eps + D^2\psi)\dd{x} 
\end{align*}
where $D^2\psi$ denotes the Hessian matrix of $\psi$, cf.~also~\cite{FaZ03}. 
This notion is strictly weaker than the symmetric quasiconvexity in the sense of Definition~\ref{def:sqc}. 
Since for every $\psi\in C_c^{\infty}((0,1)^d)$ 
the gradient $\nabla \psi$ is an admissible test field in~\eqref{theo_char1}, the asserted implication is immediate. To see that it is strict, we consider in $2$d, the function 
\begin{align*}
f_0(\eps) = \begin{cases}
\det \eps & \text{if $\eps$ is positive definite},\\
0 & \text{otherwise,}
\end{cases} \qquad \eps\in \Scal^{2\times 2},
\end{align*}
which \v{S}ver\'{a}k in~\cite{Sve92a} proved to be quasiconvex. However, $f_0$ is not symmetric rank-one convex, and therefore not symmetric quasiconvex (see~\eqref{implications} below), since the following map, which is the composition of a compatible line with $f_0$, is not convex: 
\begin{align*}
\R\to \R, \quad t\mapsto f_0(\Ibb+ 2te_1\odot e_2) =
\begin{cases} 
\det(\Ibb+ 2te_1\odot e_2) = 1-t^2 & \text{for $t\in (-1,1)$}\\ 
0 &\text{otherwise,} 
\end{cases}
\end{align*}
where $\Ibb$ is the identity matrix, and $e_1, e_2$ are the standard unit vectors in $\R^2$.

Similarly, we show in the $3$d setting that the following quasiconvex functions from~\cite{Sve92a}
\begin{align*}
f_l(\eps)  = \begin{cases}
\abs{\det \eps} & \text{if $\eps$ has exactly $l$ negative eigenvalues},\\ 
0 & \text{otherwise,}
\end{cases} \qquad \eps\in \Scal^{3\times 3},
\end{align*}
with $l=1,2$, are not symmetric rank-one convex. For $l=1$ and $l=2$, we use the compatible lines $t\mapsto {\rm diag}(1,1,-1) + 2te_1\odot e_2$ and $t\mapsto {\rm diag}(-1,-1, 1) + 2 t e_1\odot e_2$,  respectively.   

c) Linearized elasticity can be viewed within the general $\Acal$-free framework~\cite{FoM99, Mur81, Tar79}. With the second-order constant-rank operator $\Acal$ defined for $V\in C^\infty((0,1)^d;\R^{d\times d})$ by
\begin{align*}
(\Acal V)_{jk} = \sum_{i=1}^d \partial_{ik}^2 V_{ji} + \partial_{ij}^2 V_{ki}  - \partial^2_{jk}V_{ii} - \partial_{ii}^2 V_{jk}, \quad  j,k=1, \ldots, d,
\end{align*}
one has that $\Acal V=0$ in $(0,1)^d$ if and only if $V=(\nabla u)^s$ for some $u\in C^\infty((0,1)^d; \R^d)$. Consequently,~\eqref{theo_char1} corresponds to $\Acal$-quasiconvexity~\cite{FoM99}, while~\eqref{theo_char2} is equivalent to convexity along directions in the characteristic cone
\begin{align*}
\Lambda_\Acal = \cup_{\xi\in \R^d}\ker  \mathbb{A}(\xi) = \{a\odot b: a,b\in \R^d\}.
\end{align*}
Here $\mathbb{A}(\xi)V = V\xi\otimes \xi + \xi \otimes V\xi - (\trace V)\xi\otimes \xi -\abs{\xi}^2 V$ for $\xi\in \R^d$, $V\in \R^{d\times d}$ is the symbol of $\Acal$, and $\ker \mathbb{A}(\xi) = \{a\odot \xi: a\in \R^d\}$. For more details, see ~\cite[Example~3.10]{FoM99}.

d) Contrary to expectations that may arise in the light of Proposition~\ref{theo:characterization}, symmetric polyconvexity of a function $f:\Scal^{d\times d}\to \R$ according to Definition~\ref{def:sqc} is not the same as $f$ being a convex function of symmetric quasiaffine maps (or Null-Lagrangians). Indeed, since there are no non-trivial Null-Lagrangians in the symmetrized context (cf.~Section~\ref{subsec:polyaffine} for $d=2$ and Proposition~\ref{cor:polyaffine} for $d=3$), the latter property equals convexity of $f$, and is strictly stronger than symmetric polyconvexity, cf.~Example~\ref{ex:pmdet}.%
\end{remark}

The relation between the different notions of symmetric semi-convexity is an immediate consequence of the  implications for their classical versions without symmetry, see~\cite{Dac08}. Hence, it holds for $f:\Scal^{d\times d}\to \R$ that
\begin{align}
\begin{array}{l}\label{implications}
f\ \text{\rm is convex} \Rightarrow f\ \text{is symmetric polyconvex} \\ 
\hspace{2cm}\Rightarrow f\ \text{is symmetric quasiconvex} \Rightarrow f\ \text{is symmetric rank-one convex.}
\end{array}
\end{align}

Equivalence in~\eqref{implications} is in general not true. Counterexamples that are commonly cited in the classical setting for finite-valued functions on $\R^{d\times d}$ are for instance $F\mapsto \det F$, the parameter-dependent example of~\cite{AlD92, DaM88} for $d=2$, the famous counterexample by \v{S}ver\'ak~\cite{Sve92b}, which shows that quasiconvexity is strictly stronger than rank-one convexity if $d\geq 3$, and the example of $3$d rank-one convex, but not polyconvex quadratic forms in~\cite{HaM15,Ser83}. All these counterexamples   depend in a non-trivial way on the skew-symmetric parts, and are hence not suitable in the context of functions on symmetric matrices. 
In ~\cite{Ebo00,Koh91}, one finds examples of symmetric quasiconvex functions that are not convex, which have resulted from relaxation of double-well functions.
We will come back to the discussion of why the reverse implications do not hold, see in particular Example~\ref{ex:pmdet} and the proof of Theorem~\ref{theo:counterexample}.  \\

An important class of semi-convex functions with a very long history of intensive study are quadratic ones, for more recent work we refer e.g.~to~\cite{HaM15, HaM17,Zha03a}. It was shown by van Hove in \cite{Hov47, Hov49} that for general quadratic forms, the notions of quasiconvexity and rank-one convexity coincide. For $d=2$, the latter are even equivalent to polyconvexity, see~\cite[Theorem~5.25]{Dac08} and the references therein. In view of Definition~\ref{def:sqc}, these results carry over immediately to the symmetric setting, where
a quadratic form is any expression 
$$
f(\eps)=M\eps:\eps, \quad \eps\in \Scal^{d\times d},
$$
where $M$ is a fourth order tensor with the symmetries $M_{ijkl} = M_{lkij}=M_{ijlk}$. Note that a quadratic function $f$ on symmetric matrices is convex if and only if $f(\eps) \geq 0$ for any $\eps \in \Scal^{n\times n}$. 
A useful characterization for a quadratic form $f$ to be symmetric rank-one convex is that
\begin{align}\label{rankform}
f(a\odot b)\geq 0\qquad \text{ for all $a, b\in \R^d$,}
\end{align} 
as follows directly from Proposition~\ref{theo:characterization}\,$ii)$. 
Based on this characterization, Zhang~\cite{Zha03a}   classified the symmetric rank-one convex quadratic forms for $d=2,3$ via their nullity and Morse index. 
In parallel to the classical setting, symmetric rank-one convexity is strictly weaker than symmetric polyconvexity for $d=3$, as our example in Theorem~\ref{theo:counterexample} shows.

The following basic example served us as a motivation for the characterization results of symmetric polyconvex functions in Theorem~\ref{prop:charpoly2d} and Theorem~\ref{theo:characterization3d}. 

\begin{example}\label{ex:pmdet}
Let $d=2,3$. The determinant map $\Scal^{d\times d}\to \R$, $\eps\mapsto \det \eps$ is not symmetric rank-one convex, and therefore neither symmetric quasi- nor polyconvex.

In $2$d, $\eps\mapsto \det \eps$ is a quadratic form and one finds that for $a, b\in\R^2$, 
\begin{align*}
\det(a\odot b)= -\frac{1}{4}(a_1b_2-a_2b_1)^2 \leq 0. 
\end{align*} 
Hence, in view of~\eqref{rankform}, the determinant map is indeed not symmetric rank-one convex, but we find that $\eps\mapsto -\det \eps$ is. 
Since symmetric rank-one convexity is equivalent to symmetric polyconvexity for quadratic forms on $\Scal^{2\times 2}$, $\eps\mapsto -\det \eps$ is symmetric polyconvex,  and thus symmetric quasiconvex. 
A direct argument for the symmetric quasiconvexity of $\eps\mapsto -\det \eps$ uses~\eqref{det_2d}  below, the fact that for any antisymmetric matrix   $\det F^a \geq 0$ in 2d,  and exploits the Null-Lagrangian property of the determinant to conclude that 
$$
-\int_{(0,1)^2}\det(\epsilon+(\nabla \varphi)^s)\dd{x}=-\int_{(0,1)^2}\det (\epsilon+\nabla\varphi)+\int_{(0,1)^2}\det (\nabla\varphi)^a\dd{x} \geq   -\det \eps
$$
for all $\varphi\in C^{\infty}_c((0,1)^2;\R^2)$. Interestingly, the previous calculation remains valid if, instead of the function $-\det \eps$, we consider $f(\epsilon)=g(\epsilon,\det \epsilon)$ with $g:\R^5\to\R$ being a convex function that is non-increasing with respect to the last variable. Hence, every such function $f$ is symmetric quasiconvex. In Theorem~\ref{prop:charpoly2d}, we show that these functions in fact serve as characterization of symmetric polyconvexity in 2d. 

In the $3$d case, both $\eps\mapsto \det\eps$ and $\eps\mapsto - \det \eps$ fail to be symmetric rank-one convex. 
However, by taking the diagonal $2\times 2$ minors, simple examples of symmetric rank-one convex functions can be constructed. A direct adaptation of the $2$d argument above shows that the maps $\eps\mapsto-(\cof \epsilon)_{ii}$ for $\eps\in \Scal^{3\times 3}$ with $i=1,2,3$ are symmetric polyconvex, while $\eps\mapsto (\cof \epsilon)_{ii}$ are not. 
\end{example}

\section{Preliminaries}\label{sec:preliminaries}

Before proving the results announced in the introduction, we use this section to collect further relevant notation and auxiliary results.

\subsection{Notation.}\label{subsec:notations}
This work focuses on the space dimensions $d=2,3$. We write $a\cdot b$ with $a, b\in \R^d$ for the standard inner product on $\R^d$, and use the scalar product $A:B=\sum_{i, j=1}^d A_{ij} B_{ij}$ for $d\times d$ matrices $A$ and $B$. The latter induces the Frobenius norm $|A|^2:=A:A$ on $\R^{d\times d}$. Moreover,  $e_i$ with $i=1, \ldots, d$ are the standard unit vectors in $\R^d$,  $(a\otimes b)_{ij} = a_ib_j$ with $i, j=1, \ldots, d$ for $a, b\in \R^d$ is the tensor product of $a$ and $b$, and $a\odot b=\frac{1}{2}(a\otimes b + b\otimes a)$ with $a, b\in \R^d$.
Further, ${\rm diag}(\lambda_1, \ldots, \lambda_d)$ with $\lambda_i\in \R$ is our notation for diagonal $d\times d$ matrices. 

Let us denote by $\Scal^{d\times d}$ the set of symmetric matrices in $\R^{d\times d}$. Any $F\in \R^{d\times d}$ can be decomposed into its symmetric and antisymmetric part, i.e.~
$F=F^s+F^a$ with $F^s=\frac{1}{2}(F+F^T)\in \Scal^{d\times d}$ and $F^a=\frac{1}{2}(F-F^T)$.   For the subsets of positive and negative semi-definite matrices in $\Scal^{d\times d}$ we use the notations $\Scal^{d\times d}_+$ and $\Scal^{d\times d}_-$, respectively.  

In the $3$d case, the $2\times 2$ minors of $F\in \R^{3\times 3}$ are $M_{ij}(F) = \det \widehat{F}_{ij}$ for $i, j\in \{1,2,3\}$, where $\widehat{F}_{ij}$ stands for the matrix obtained from $F$ by deleting the $i$th row and the $j$th column. 
The cofactor matrix of $F$ is then given by
\begin{align*}
(\cof F)_{ij} = (-1)^{i+j} M_{ij}(F), \quad i,j\in \{1,2,3\}.
\end{align*} 
  One can split $\cof F\in \R^{3\times 3}$ into its diagonal and non-diagonal entries, denoted by $\cof_{\rm diag} F \in \R^3$ and $\cof_{\rm off} F\in \R^6$, respectively. Precisely, 
\begin{align*}
\cof_{\rm diag} F & = ((\cof F)_{11}, (\cof F)_{22}, (\cof F)_{33})^T 
\end{align*}
and
\begin{align*}
\cof_{\rm off} F & = ((\cof F)_{12}, (\cof F)_{13}, (\cof F)_{23}, (\cof F)_{21}, (\cof F)_{31}, (\cof F)_{32})^T.
\end{align*}
For symmetric $\eps\in \Scal^{3\times 3}$ we know that $\cof \eps$ is symmetric, so that we may take $\cof_{\rm off} \eps\in \R^3$, i.e., 
\begin{align*}
\cof_{\rm off} \eps& = ((\cof \eps)_{12}, (\cof \eps)_{13}, (\cof \eps)_{23})^T.
\end{align*}

\subsection{Properties of symmetric matrices and their minors.}
For $d=2$, we observe that
\begin{align}\label{det_2d}
\det F=\det F^s+\det F^a, \qquad F\in \R^{2\times 2},
\end{align}
and for $d=3$, straightforward calculations yield that 
\begin{align}\label{cof_3d}
\cof F^s = (\cof F)^s - \cof F^a, \qquad F\in \R^{3\times 3},
\end{align}
and
\begin{align*}
\det F=\det F^s + \cof F^a:F = \det F^s+\cof F^a:F^s, \qquad F\in \R^{3\times 3}.
\end{align*} 
Notice that $\cof F^a, \cof F^s\in \Scal^{3\times 3}$ for $F \in \R^{3\times 3}$. In general, $\cof F^T =(\cof F)^T$ for all $F\in \R^{3\times 3}$. 

Another useful formula is Cramer's rule, which states that for all $F\in \R^{3\times 3}$,
\begin{align}\label{Cramer}
(\det F)\Ibb = (\cof F)F^T.
\end{align}
Here $\Ibb$ is the identity matrix in $\R^{3\times 3}$.

If $F$ is antisymmetric, i.e.~$F=F^a$, there exist $x\in \R^3$ such that 
\begin{align*}
F=\begin{pmatrix} 0 & x_3 & -x_2\\ -x_3 & 0 & x_1 \\ x_2& -x_1 & 0 \end{pmatrix},
\end{align*}
and consequently,
\begin{align}\label{cofFa}
\cof F = \begin{pmatrix} x_1^2 & x_1x_2 & x_1x_3 \\ x_1x_2 & x_2^2 & x_2x_3\\  x_1x_3 & x_2x_3 & x_3^2 \end{pmatrix} = \begin{pmatrix} x_1 \\ x_2\\ x_3\end{pmatrix} \otimes \begin{pmatrix} x_1 \\ x_2 \\ x_3\end{pmatrix} \in \Scal^{3\times 3}. 
\end{align} 

Hence, we obtain for $a, b\in \R^3$ that
\begin{align}\label{cofab_anti}
\cof \bigl((a\otimes b)^a\bigr) =  \tfrac{1}{4} (a\times b)\otimes (a\times b), 
\end{align}
where $a\times b$ stands for the cross product of the vectors $a$ and $b$. In component notation, $(a\times b)_i = a_{j}b_{k}-a_{k}b_{j}$ for every circular permutation $(ijk)$ of $(123)$.
In view of~\eqref{cof_3d} and the elementary calculation that $\cof(a\otimes b)=0$, we also find that 
\begin{align}\label{cof_ab}
\cof(a\odot b) = -\tfrac{1}{4} (a\times b)\otimes (a\times b), \quad a, b \in \R^3.
\end{align}
 
These elementary observations allow us to prove the following useful lemma. 
\begin{lemma}\label{convexquadra}
For $A \in \Scal^{3\times 3}$ let $q_A:\R^{3\times 3}\to\R$ be the quadratic form given by
\begin{align}\label{QA}
q_A(F)= A:\cof F^a, \quad F\in \R^{3\times 3}.
\end{align}
Then the following three conditions are equivalent:
\begin{itemize}
\item[$i)$] $q_A$ is convex;
\item[$ii)$] $q_A$ is rank-one convex;
\item[$iii)$] $A$ is positive semi-definite. 
\end{itemize}
\end{lemma}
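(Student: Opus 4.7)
The plan is to reduce everything to an ordinary positive semi-definiteness question on $\R^3$, exploiting the parametrization of antisymmetric $3\times 3$ matrices by vectors. The first step is to observe, via~\eqref{cofFa}, that whenever $F\in\R^{3\times 3}$ has antisymmetric part associated with the vector $x=x(F)\in\R^3$ (so $x_1=F^a_{23}$, $x_2=F^a_{31}$, $x_3=F^a_{12}$, all linear in $F$), then $\cof F^a = x\otimes x$, and hence
\begin{align*}
q_A(F) = A:(x\otimes x) = x^{T}A x.
\end{align*}
Thus $q_A$ is a quadratic form on $\R^{3\times 3}$ that collapses into a genuine quadratic form $x\mapsto x^TAx$ on $\R^3$ composed with the linear surjection $F\mapsto x(F)$.

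With this representation the implication (iii)$\Rightarrow$(i) is immediate: if $A\in\Scal^{3\times 3}_+$, then $x^{T}Ax\ge 0$ for every $x\in\R^3$, so $q_A\ge 0$ on $\R^{3\times 3}$, and a non-negative quadratic form is convex. The implication (i)$\Rightarrow$(ii) is the standard chain of implications~\eqref{implications} applied to $q_A$.

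The remaining step is (ii)$\Rightarrow$(iii). For a quadratic form, rank-one convexity is equivalent to non-negativity on rank-one matrices, so by~\eqref{cofab_anti},
\begin{align*}
0 \le q_A(a\otimes b) = A:\cof((a\otimes b)^a) = \tfrac{1}{4}(a\times b)^{T} A\, (a\times b) \qquad \text{for all } a,b\in\R^3.
\end{align*}
Since the cross product $\R^3\times\R^3\to\R^3$, $(a,b)\mapsto a\times b$, is surjective (e.g.\ $y=e_1\times(-y_3 e_2+y_2 e_3)+\ldots$, or trivially by taking $a,b$ appropriate unit vectors), this forces $y^{T}Ay\ge 0$ for every $y\in\R^3$, which is exactly $A\in\Scal^{3\times 3}_+$.

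I do not anticipate any serious obstacle: once $q_A$ has been rewritten as $x(F)^{T}A x(F)$ via~\eqref{cofFa}, the argument reduces to elementary linear algebra, with the only small observation being the surjectivity of the cross product used in the last implication.
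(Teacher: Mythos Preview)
Your proof is correct and follows essentially the same route as the paper's: both reduce $q_A(F)$ to $x^T A x$ via~\eqref{cofFa} for $(iii)\Rightarrow(i)$, and both use~\eqref{cofab_anti} together with the surjectivity of the cross product for $(ii)\Rightarrow(iii)$. One cosmetic point: your citation of~\eqref{implications} for $(i)\Rightarrow(ii)$ is slightly off, since that display concerns the \emph{symmetric} notions on $\Scal^{d\times d}$ while $q_A$ lives on $\R^{3\times 3}$; the implication you need is of course the classical ``convex $\Rightarrow$ rank-one convex'', which is trivial anyway.
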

\begin{proof} Clearly, $i)$ implies $ii)$. 
For the implication $``ii) \Rightarrow iii)"$ take any $x\in \R^3$ and select vectors $a, b\in \R^3$ such that $a\times b=x$.
Then, by~\eqref{cofab_anti},
\begin{align*}
q_A(t a\otimes b) & = t^2 A:\cof((a\otimes b)^a) = \tfrac{1}{4}t^2 A: (x\otimes x) = \tfrac{1}{4} t^2 (Ax\cdot x)
\end{align*}
for all $t\in \R$. Since $q_A$ is convex along the rank-one line $t\mapsto t(a\otimes b)$, it follows that $Ax\cdot x\geq 0$. Hence, $A\in \Scal^{3\times 3}_+$.  

To prove $``iii) \Rightarrow i)"$ we show that $q_A(F)\geq 0$ for all $F\in \R^{3\times 3}$. By choosing $x\in \R^3$ 
such that 
\begin{align*}
F^a = \begin{pmatrix} 0 & x_3 & -x_2 \\  -x_3 & 0 & x_1 \\ x_2 & -x_1 & 0 \end{pmatrix},
\end{align*}
one obtains due to~\eqref{cofFa} and the positive definiteness of $A$ that
\begin{align*}
q_A(F) = A : \cof F^a = A: (x\otimes x) = A x\cdot x\geq 0.
\end{align*}
\end{proof}

We denote the convex hull of any set $S\subset \R^{d\times d}$ by $S^c$. Later on, we will utilize the characterizations of two specific convex hulls, namely
\begin{align}\label{convexhull_2d}
\{\eps\in \Scal^{2\times 2}: \det \eps=0\}^c=\Scal^{2\times 2}
\end{align} 
and 
\begin{align}\label{convexhull_3d} 
\{\eps\in \Scal^{3\times 3}: \cof \eps =0\}^c = \Scal^{3\times 3}. 
\end{align} 
This follows from the observation that the convex hull of a set $S\subset\R^{d\times d}$ with the property that $S=\alpha S$ for all $\alpha \in \R$ coincides with its linear span. For $S=\{\eps\in \Scal^{d\times d}: \det \eps=0\}$ the latter coincides with $\Scal^{d\times d}$.

\subsection{Convex functions, subdifferentials and monotonicity.}
For vectors $y, \bar{y}\in \R^n$ the order relation $y \leq \bar{y}$ is to be understood componentwise, that is as $y_i \leq \bar{y}_i$ for $i=1, \ldots, n$, and analogously for $y\geq \bar{y}$. Monotonicity of a function $h:\R^n\to \R$ is also defined componentwise, that is, $h$ is called non-increasing (non-decreasing) if $h(y)\geq h(\bar{y})$ for all $y, \bar{y}\in \R^n$ with $y \leq  \bar{y}$ ($y\geq \bar{y}$).  

For a convex function $h:\R^n\to \R$ and $y\in \R^n$, the non-empty, compact and convex set 
\begin{align}\label{subdifferential}
\partial h(y) = \{b\in \R^n: h(\tilde{y}) \geq h(y)   +  b\cdot (\tilde{y}-y)   \text{ for all $y, \tilde{y}\in \R^n$}\}
\end{align}
is the subdifferential of $h$ in $y$. Next, we summarize some further well-known facts for subdifferentials. An element $b\in \partial h(y)$ is commonly referred to as subgradient of $h$ in $y$, see e.g.~\cite{Dac08,Roc97}. The subdifferential was introduced to generalize the classical notion of differentiability to general convex functions. If $g$ is differentiable in $y$, then $\partial h(y)$ contains exactly one element, namely the gradient $\nabla h(y)$. 
In generalization of the smooth case, the function $h$ is non-increasing (non-decreasing) if and only if $\partial h(y)\subset (-\infty, 0]^n$  ($\partial h(y)\subset [0, \infty)^n$) for all $y\in\R^n$. The multivalued map $\partial h: \R^n\to \Pcal(\R^n)$, where $\Pcal(\R^n)$ is the power set of $\R^n$, is a monotone operator, meaning that 
\begin{align}\label{monotone_operator}
(b-\tilde{b})\cdot (y-\tilde{y})\geq 0 \qquad \text{for all $y, \tilde{y}\in \R^n$, $b\in \partial h(y)$, $\tilde b\in \partial h(\tilde y)$,}
\end{align}
cf.~\cite{Roc70}. Moreover, $\partial h$ is continuous in the sense that if $(y^{(j)})_j\subset\R^n$ is a sequence that converges to some $y\in \R^n$ and $\delta>0$, there exists an index $J\in \N$ such that 
\begin{align}\label{continuity}
\partial h(y^{(j)})\subset \partial h(y) + \delta B_1(0) \qquad \text{for every $j\geq J$,}
\end{align}
where $B_1(0)$ is the unit ball in $\R^n$ with center in the origin, see~\cite[Theorem~24.5]{Roc97}.

Next, we will collect and prove a few auxiliary results on (partial) subdifferentials and monotonicity properties of convex functions defined on the product space $\R^m\times \R^n$.
If $g: \R^m\times \R^n \to \R$ is convex, one can define the partial subdifferentials $\partial_i g(x, y)$ for $i=1,2$, via \eqref{subdifferential}  by freezing the dependence of one of the (vector) variables. 
Regarding the relation between the full and the partial subdifferentials, it is immediate to see that 
\begin{align}\label{inclusion}
\partial g(x, y) \subset \partial_1 g(x, y)\times \partial_2 g(x, y) \qquad \text{for every $(x, y)\in \R^{m\times n}$.}
\end{align} 

For the reverse inclusion, which is more subtle, we state the following version of~\cite[Theorem~3.3]{BeC99} adapted to the present situation. Let us remark that in the case of convex functions, the notion of Dini subgradient or Fr\'{e}chet subderivate used in~\cite{BeC99} coincides with the classical subgradients of convex analysis. A summary of results on Fr\'{e}chet subdifferentials can be found in the survey article~\cite{Kru03}.

\begin{lemma}\label{lem:partial_subdifferential}
Let $g:\R^m\times \R^n\to \R$ be convex. Then there exists a dense Borel set $S\subset \R^m\times \R^n$ whose complement is a Lebesgue null-set such that
 \begin{align*}
\partial g(x, y) =\partial_1 g(x, y) \times \partial_2 g(x, y) \quad \text{for every $(x, y)\in S$.}
\end{align*}
Moreover, for any $x\in \R^m$ the set $S_x=\{y\in \R^n: (x, y)\in S\}$ lies dense in $\R^n$ and its complement has zero Lebesgue measure.
\end{lemma}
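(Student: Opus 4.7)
My plan is to define $S$ as the set of $(x, y) \in \R^m \times \R^n$ at which the partial convex function $g(x, \cdot)$ is differentiable at $y$, so that $\partial_2 g(x, y)$ reduces to the singleton $\{\nabla_y g(x, y)\}$. Since every finite-valued convex function on $\R^n$ is differentiable Lebesgue-almost everywhere, each slice $S_x$ has complement of measure zero in $\R^n$, hence is automatically dense there; by Fubini's theorem, $S$ itself has null complement in $\R^m \times \R^n$. To see that $S$ is Borel, I would use the fact that differentiability of $g(x, \cdot)$ at $y$ is equivalent to the coincidence, in every coordinate direction of $\R^n$, of the one-sided partial derivatives, each of which is a monotone limit of difference quotients of the continuous function $g$ and therefore Borel-measurable jointly in $(x, y)$.

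The inclusion $\partial g(x, y) \subset \partial_1 g(x, y) \times \partial_2 g(x, y)$ on $S$ is precisely the one already recorded in~\eqref{inclusion}. For the reverse, I fix $(x_0, y_0) \in S$, set $b_0 := \nabla_y g(x_0, y_0)$, pick an arbitrary $a \in \partial_1 g(x_0, y_0)$, and aim to show $(a, b_0) \in \partial g(x_0, y_0)$. By the standard characterization of the subdifferential of a convex function as the support set of its directional derivative, it suffices to verify
\[
g'((x_0, y_0); (u, w)) \geq a \cdot u + b_0 \cdot w
\]
for every $(u, w) \in \R^m \times \R^n$. To produce such a bound, I would chain, for small $\lambda > 0$ and any subgradient $c_\lambda \in \partial_2 g(x_0 + \lambda u, y_0)$,
\[
g(x_0 + \lambda u, y_0 + \lambda w) \geq g(x_0 + \lambda u, y_0) + \lambda\, c_\lambda \cdot w \geq g(x_0, y_0) + \lambda\, a \cdot u + \lambda\, c_\lambda \cdot w,
\]
where the second inequality uses $a \in \partial_1 g(x_0, y_0)$; dividing by $\lambda$ and passing to the limit $\lambda \to 0^+$ yields the desired estimate once $c_\lambda \to b_0$ is in hand.

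The main obstacle is exactly this convergence $c_\lambda \to b_0$, a form of upper semi-continuity of the partial subdifferential under perturbations of the first variable. My plan is to observe that local Lipschitz continuity of the finite-valued convex function $g$ near $(x_0, y_0)$ keeps $(c_\lambda)_\lambda$ bounded; any cluster point $c_*$ then satisfies, by continuity of $g$ and the defining inequality $g(x_0 + \lambda u, y) \geq g(x_0 + \lambda u, y_0) + c_\lambda \cdot (y - y_0)$, the limiting subgradient inequality $g(x_0, y) \geq g(x_0, y_0) + c_* \cdot (y - y_0)$ for every $y$. Hence $c_* \in \partial_2 g(x_0, y_0) = \{b_0\}$, which forces $c_* = b_0$ and thus convergence of the whole net. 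The remaining measure-theoretic assertions of the lemma are routine consequences of Fubini's theorem and the almost-everywhere differentiability of convex functions.
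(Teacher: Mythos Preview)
Your argument is correct and self-contained. The paper, by contrast, does not prove this lemma at all: it simply records it as an adaptation of \cite[Theorem~3.3]{BeC99} (a result stated there for Dini/Fr\'echet subgradients, which coincide with convex subgradients in the present setting). So there is no ``paper's proof'' to compare against beyond a citation.

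Your route---taking $S$ to be the set where $g(x,\cdot)$ is differentiable at $y$, so that $\partial_2 g(x,y)$ is a singleton, and then showing $\partial_1 g(x_0,y_0)\times\{b_0\}\subset\partial g(x_0,y_0)$ via the directional-derivative characterization of the subdifferential and an upper-semicontinuity argument for $c_\lambda\in\partial_2 g(x_0+\lambda u,y_0)$---is the standard direct approach and all steps check out. The key technical ingredients you invoke (Rockafellar-type facts that a finite convex function on $\R^n$ is differentiable precisely where all one-sided coordinate partials agree, that subgradients are locally bounded by the local Lipschitz constant, and that cluster points of $c_\lambda$ land in $\partial_2 g(x_0,y_0)$ by passing to the limit in the subgradient inequality) are all valid. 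The Borel measurability of $S$ and the slice/Fubini claims are routine as you indicate. One cosmetic remark: your $S$ is asymmetric in the two groups of variables, but that is perfectly fine for the stated conclusion, and it has the pleasant side effect of making the ``moreover'' clause about the slices $S_x$ immediate.
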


The next lemma generalizes the elementary observation that every bounded and convex function $\R^n\to \R$ is constant to the situation where one has only partial bounds on the growth behavior of the function in selected variables. 

\begin{lemma}\label{lem:aux}
Let $g:\R^m\times \R^n \to \R$ be convex.
\begin{itemize}
\item[$i)$] The function $g$ is non-increasing (non-decreasing) in the second variable if and only if $\partial_2 g(x, y)\subset (-\infty, 0]^n$ ($\partial_2 g(x,y)\subset [0, \infty)^n$) for every $(x,y) \in \R^m\times \R^n$. 
\item[$ii)$] If $\partial g(x, y)\subset \R^m\times (-\infty, 0]^n$ ($\partial g(x, y)\subset \R^m\times [0, \infty)^n$) for all $(x, y) \in \R^m\times \R^n$, then $\partial_2 g(x, y)\subset (-\infty, 0]^n$ ($\partial_2 g(x, y)\subset [0, \infty)^n$) for every $(x, y)\in \R^m\times \R^n$.  
\item[$iii)$] If there exists $\hat{x}\in \R^m$ such that $\partial_2 g(\hat{x}, y)\subset (-\infty, 0]^n$ $(\partial_2 g(\hat{x}, y)\subset [0, \infty)^n)$ for all $y\in\R^n$, then $\partial_2 g(x, y)\subset (-\infty, 0]^n$ ($\partial_2 g(x, y)\subset [0, \infty)^n$) for all $(x, y)\in \R^m\times \R^n$.
\item[$iv)$] If there exists $(\hat{x}, \hat{y})\in \R^{m}\times \R^n$ and a constant $C>0$ such that 
\begin{align}\label{ass:bound}
g(\hat{x}, y)\leq C \qquad \text{for all $y\geq \hat{y}$ ($y\leq \hat y$),}
\end{align}
then $\partial_2 g(x, y)\subset (-\infty, 0]^n$ ($\partial_2 g(x, y)\subset [0, \infty)^n$) for all $(x, y)\in \R^m\times \R^n$.
\end{itemize}
\end{lemma}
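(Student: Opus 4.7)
The plan is to reduce each of the four parts to the statement that $y\mapsto g(x,y)$ is non-increasing for the appropriate $x$, and then to invoke part $i)$. I treat only the non-increasing case; the non-decreasing variants are entirely analogous. For part $i)$ itself, I would fix $x$ and work with $h(y):=g(x,y)$: if $h$ is non-increasing and $b\in\partial h(y)$, then for $t>0$ the subgradient inequality $h(y+te_i)\geq h(y)+tb_i$ combined with $h(y+te_i)\leq h(y)$ forces $b_i\leq 0$; conversely, since $h'(y;e_i)=\max\{b\cdot e_i:b\in\partial h(y)\}$, the hypothesis $\partial h(y)\subset(-\infty,0]^n$ makes the right derivative of $t\mapsto h(y+te_i)$ non-positive everywhere, giving monotonicity in each coordinate direction.

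For part $ii)$, I would fix $y_1\leq y_2$ and $x$, set $\psi(t):=g(x,(1-t)y_1+ty_2)$ on $[0,1]$, and apply the chain rule for directional derivatives of convex functions:
\begin{align*}
\psi'_+(t)=\max\bigl\{b_2\cdot(y_2-y_1):(b_1,b_2)\in\partial g(x,(1-t)y_1+ty_2)\bigr\}\leq 0,
\end{align*}
since $b_2\leq 0$ by hypothesis and $y_2-y_1\geq 0$. Hence $\psi$ is non-increasing, $g(x,y_2)\leq g(x,y_1)$, and part $i)$ finishes the argument.

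Parts $iii)$ and $iv)$ share a single midpoint-convexity trick. The identity
\begin{align*}
(x,y+te_i)=\tfrac12\,(\hat x,\hat y+2te_i)+\tfrac12\,(2x-\hat x,2y-\hat y)
\end{align*}
combined with convexity of $g$ yields, for every $t\geq 0$,
\begin{align*}
g(x,y+te_i)\leq \tfrac12\,g(\hat x,\hat y+2te_i)+\tfrac12\,g(2x-\hat x,2y-\hat y).
\end{align*}
Under the hypothesis of $iii)$, part $i)$ applied to $g(\hat x,\cdot)$ gives $g(\hat x,\hat y+2te_i)\leq g(\hat x,\hat y)$; under the hypothesis of $iv)$, the same term is bounded by $C$ directly since $\hat y+2te_i\geq\hat y$. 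In either case $\phi(t):=g(x,y+te_i)$ is convex on $\R$ and bounded above on $[0,\infty)$, which forces $\phi$ to be non-increasing on all of $\R$ by an elementary slope comparison. Varying $i$ and $y$, the function $g(x,\cdot)$ is non-increasing for every $x$, and $i)$ concludes.

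The main subtlety I anticipate is the one-way inclusion~\eqref{inclusion}, namely $\partial g\subset\partial_1 g\times\partial_2 g$, with equality only on a generic set: simply projecting the full subdifferential onto its second factor cannot directly control all of $\partial_2 g$. Routing each part through the genuine monotonicity of the slice $g(x,\cdot)$ and the characterization in $i)$ bypasses this obstacle, and in particular Lemma~\ref{lem:partial_subdifferential} is not invoked here.
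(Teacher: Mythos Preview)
Your proof is correct, and it takes a genuinely different route from the paper's in parts $ii)$--$iv)$.

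For part $ii)$ the paper invokes Lemma~\ref{lem:partial_subdifferential} (the density result on where $\partial g=\partial_1 g\times\partial_2 g$, taken from~\cite{BeC99}) together with the monotone-operator property and the upper semicontinuity of the subdifferential to pass from the dense set to all points. Your argument bypasses this entirely: you use the standard formula $g'((x,y);(0,v))=\max\{b_2\cdot v:(b_1,b_2)\in\partial g(x,y)\}$ for the directional derivative, which immediately gives non-increase of the slice $g(x,\cdot)$, and then part $i)$ closes. This is shorter and self-contained.

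For parts $iii)$ and $iv)$ the paper again works through subgradients: in $iii)$ it tests the full subgradient inequality against points $(\hat x,\tilde y)$ with $\tilde y\to\infty$ to force $\partial g(x,y)\subset\R^m\times(-\infty,0]^n$, and then reduces to $ii)$; in $iv)$ it first shows $\partial_2 g(\hat x,\cdot)\subset(-\infty,0]^n$ and reduces to $iii)$. Your midpoint-convexity identity instead turns both hypotheses directly into a one-sided upper bound on the convex scalar function $t\mapsto g(x,y+te_i)$, after which the elementary slope argument forces monotonicity. This avoids the detour through $ii)$ altogether.

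The trade-off is minor: the paper's route makes the logical flow $iv)\Rightarrow iii)\Rightarrow ii)$ explicit and keeps subdifferentials in the foreground (which matches how the lemma is used later in Theorems~\ref{prop:charpoly2d} and~\ref{theo:characterization3d}), while your route is more elementary and, as you note, makes Lemma~\ref{lem:partial_subdifferential} unnecessary for this lemma.
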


\begin{proof}
  We will only prove the primary statements, since those in brackets follow analogously.   
Part $i)$ results directly from the single-variable case mentioned above.

Regarding $ii)$, we let $S$ be the set resulting from Lemma~\ref{lem:partial_subdifferential} and fix $(x, y)\in \R^{m}\times \R^n$. If $(x, y)\in S$, the inclusion $\partial_2 g(x, y)\subset (-\infty, 0]^n$ is an immediate consequence of Lemma~\ref{lem:partial_subdifferential}. If $(x,y)\notin S$, the density of $S_x$ in $\R^n$ yields
for any $i\in \{1, \ldots, n\}$ a sequence $(y^{(i, j)})_j\subset S_x$ such that 
\begin{align}\label{conv55}
y^{(i, j)}\to y+e_i\quad \text{as $j\to \infty$.}
\end{align}  
Recall that $e_i$ denotes the $i$th standard unit vector in $\R^n$.
Together with the monotonicity of the subdifferential operator (see~\eqref{monotone_operator}) it follows then that for every $b^{(i, j)}\in \partial_2 g(x, y^{(i, j)})\subset (-\infty, 0]^n$ and $b\in \partial_2 g(x, y)$, 
\begin{align*}
b^{(i,j)}\cdot \delta^{(i,j)} \geq b^{(i,j)}\cdot e_i + b^{(i,j)} \cdot\delta^{(i,j)}=  b^{(i, j)} \cdot (y^{(i,j)}-y) \geq b\cdot (y^{(i,j)}-y),
\end{align*}
where $\delta^{(i,j)} = y^{(i, j)}-y-e_i$.
Letting $j\to \infty$, the convergence statement~\eqref{conv55} along with the uniform boundedness 
of $b^{(i,j)}$ with regard to $j$, which is a consequence of the continuity of the subdifferential (cf.~\eqref{continuity}), implies that $b_i = b\cdot e_i\leq 0$ for every $i\in \{1, \ldots, n\}$, meaning $b\leq 0$. Hence, all subgradients in $\partial_2 g(x, y)$ are not greater than the zero vector, which is $ii)$.

For $iii)$, we observe that $g(\hat x, \cdot):\R^n\to \R$ is non-increasing according to $i)$.  
If $x\in \R^m$, $y\in \R^n$ and $(\tilde{b}, b)\in \partial g(x, y)$, then it holds for all $\tilde{y}\in \R^n$ with $\tilde{y}\geq 0$ that
\begin{align*}
g(\hat x, 0) \geq g(\hat x, \tilde y) \geq g(x, y) +  \tilde b \cdot (\hat{x} - x) + b \cdot (\tilde y - y).
\end{align*}

Since the left-hand side is independent of $\tilde y$, one has that $b\leq 0$. Hence, $\partial g(x, y)\subset \R^m\times (-\infty, 0]^n$, and therefore $\partial_2 g(x, y)\subset (-\infty, 0]^n$ for all $(x, y)\in \R^m\times \R^n$ by~$ii)$. 

To see $iv)$, let $y\in\R^n$ and consider the subdifferential inequality with respect to the second variable. If $\hat b \in \partial_2 g(\hat x, y)$, then
\begin{align*}
g(\hat{x}, \tilde y) \geq g(\hat x, y) + \hat b\cdot (\tilde y- y) \quad \text{for any } \tilde{y}\in \R^n.
\end{align*}
By letting $\tilde y\to +\infty$ componentwise, we deduce in view of the upper bound in~\eqref{ass:bound} that $\hat b\leq 0$. This proves that $\partial_2 g(\hat{x}, y) \subset (-\infty, 0]^n$ for all $y\in \R^n$, and $iv)$ results then directly from $iii)$.
\end{proof}

After suitable identifications, the above results also apply to functions defined on Cartesian products between $\R^{d\times d}$, $\Scal^{d \times d}$, and $\R^n$. Moreover, we have the following lemma, which we will use in the proof of Theorem~\ref{theo:characterization3d}.

\begin{lemma}\label{lem:aux2}
Let $g:\Scal^{d\times d}\times \Scal^{d\times d}\to \R$ be convex.

\begin{itemize} 
\item[$i)$] If $\partial g(\eps, \eta)\subset \Scal^{d\times d}\times \Scal^{d\times d}_-$ for all $\eps, \eta \in \Scal^{d\times d}$, then $\partial_2 g(\eps, \eta)\subset \Scal^{d\times d}_-$ for every $\eps, \eta\in \Scal^{d\times d}$.  
\item[$ii)$] If there exists $\hat{\eps}\in \Scal^{d\times d}$ and a constant $C>0$ such that 
\begin{align*}
g(\hat{\eps}, x\otimes x)\leq C \qquad \text{for all $x\in \R^d$,}
\end{align*}
then $\partial_2 g(\eps, \eta)\subset \Scal^{d\times d}_-$ for all $\eps, \eta\in \Scal^{d\times d}$.
\end{itemize}
\end{lemma}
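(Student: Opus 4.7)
The plan is to mimic the arguments in Lemma~\ref{lem:aux}~$ii)$ and~$iv)$, with the componentwise order on $\R^n$ replaced by the semi-definite order on $\Scal^{d\times d}$, and the canonical unit vectors $e_i$ replaced by rank-one positive semi-definite test matrices $x\otimes x$, $x\in\R^d$. The key observation is that $b\in\Scal^{d\times d}_-$ is equivalent to $b:(x\otimes x)=bx\cdot x\leq 0$ for every $x\in\R^d$, so showing that $b$ is negative semi-definite amounts to testing along these rank-one directions.

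For~$i)$, I would fix $(\eps,\eta)\in\Scal^{d\times d}\times\Scal^{d\times d}$, $x\in\R^d$, and $b\in\partial_2 g(\eps,\eta)$, and use the density part of Lemma~\ref{lem:partial_subdifferential} to pick a sequence $(\eta^{(j)})_j\subset S_\eps$ converging to $\eta+x\otimes x$. At each point of $S$ the full and partial subdifferentials coincide as a Cartesian product, so the hypothesis $\partial g(\eps,\eta^{(j)})\subset\Scal^{d\times d}\times\Scal^{d\times d}_-$ supplies subgradients $b^{(j)}\in\partial_2 g(\eps,\eta^{(j)})\subset\Scal^{d\times d}_-$; in particular $b^{(j)}:(x\otimes x)\leq 0$. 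The monotonicity relation~\eqref{monotone_operator} applied to the partial subdifferential $\partial_2 g(\eps,\frarg)$ then yields
\begin{align*}
(b^{(j)}-b):(\eta^{(j)}-\eta)\geq 0,
\end{align*}
and writing $\eta^{(j)}-\eta = x\otimes x + \delta^{(j)}$ with $\delta^{(j)}\to 0$, one rearranges this into
\begin{align*}
(b^{(j)}-b):\delta^{(j)}\geq b:(x\otimes x) - b^{(j)}:(x\otimes x)\geq b:(x\otimes x).
\end{align*}
Local boundedness of $\partial_2 g(\eps,\frarg)$ near $\eta+x\otimes x$, which follows from~\eqref{continuity}, forces the left-hand side to vanish as $j\to\infty$, leaving $b:(x\otimes x)\leq 0$. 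Arbitrariness of $x\in\R^d$ delivers $b\in\Scal^{d\times d}_-$.

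For~$ii)$, my plan is to reduce to~$i)$ by first deducing from the upper bound assumption the stronger inclusion $\partial g(\eps,\eta)\subset\Scal^{d\times d}\times\Scal^{d\times d}_-$ for every $(\eps,\eta)$. Given $(\tilde b,b)\in\partial g(\eps,\eta)$ and $x\in\R^d$, testing the subdifferential inequality at $(\hat\eps,t\,x\otimes x)$ with $t>0$ yields
\begin{align*}
C\geq g(\hat\eps,t\,x\otimes x)\geq g(\eps,\eta)+\tilde b:(\hat\eps-\eps)+t\,b:(x\otimes x)-b:\eta,
\end{align*}
and letting $t\to+\infty$ forces $b:(x\otimes x)\leq 0$, hence $b\in\Scal^{d\times d}_-$. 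Once this global inclusion is in hand, part~$i)$ finishes the proof. I expect the main subtlety to lie in~$i)$, specifically in coupling the density of $S_\eps$ with the monotonicity and continuity of the subdifferential so that only the rank-one directions $x\otimes x$ need to be tested; the growth argument in~$ii)$ is by comparison straightforward.
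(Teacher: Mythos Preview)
Your proposal is correct and follows essentially the same approach as the paper: part~$i)$ is identical (density via Lemma~\ref{lem:partial_subdifferential}, monotonicity of the subdifferential, and passage to the limit along rank-one test directions $x\otimes x$), and part~$ii)$ likewise reduces to~$i)$ after showing that the second component of every full subgradient is negative semi-definite. Your argument for~$ii)$ is in fact slightly more direct than the paper's, which first establishes $\partial_2 g(\hat\eps,\eta)\subset\Scal^{d\times d}_-$ and a monotonicity property of $g(\hat\eps,\cdot)$ before testing the full subdifferential, whereas you apply the subdifferential inequality at $(\hat\eps,t\,x\otimes x)=(\hat\eps,(\sqrt{t}\,x)\otimes(\sqrt{t}\,x))$ immediately; both routes are valid.
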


\begin{proof} The proof is similar to that of the previous lemma.
  Let $\eps, \eta\in \Scal^{d\times d}$ and $B\in \partial_2g(\eps, \eta)$, as well as $x\in \R^d$ be fixed. To prove the statement $i)$, it suffices to verify that 
\begin{align}\label{B}
B:x\otimes x\leq 0.
\end{align} 
After identifying $\Scal^{d\times d}$ with $\R^{\frac{1}{2}d(d+1)}$, Lemma~\ref{lem:partial_subdifferential} implies the existence of a dense set $S_\eps\subset \Scal^{d\times d}$ whose complement is a null-set such that 
$\partial_2 g(\eps, \tilde \eta)\subset \Scal^{d\times d}_-$ for any $\tilde \eta\in S_\eps$.
Hence, we can find a sequence $(\eta^{(x, j)})_j \subset S_\eps$ 
such that 
\begin{align}\label{con_eta}
\eta^{(x, j)}\to \eta + x\otimes x\quad \text{as $j\to \infty$.}
\end{align} 
Let $B^{(x, j)} \in \partial_2 g(\eps, \eta^{(x, j)})\subset \Scal^{d\times d}_-$. It follows then along with the monotonicity of the subdifferential that 
\begin{align}\label{B2}
B^{(x,j)}:\delta^{(x, j)}\geq  B^{(x, j)}:(x\otimes x) + B^{(x,j)}:\delta^{(x, j)} = B^{(x,j)} : (\eta^{(x,j)} - \eta)\geq B:(\eta^{(x, j)} - \eta), 
\end{align}
where $\delta^{(x, j)} = \eta^{(x,j)} - \eta - x\otimes x$. Finally, we pass to the limit $j\to \infty$ in~\eqref{B2}. As $\delta^{(x, j)}\to 0$ for $j\to \infty$ by~\eqref{con_eta}, and $B^{(x, j)}$ is uniformly bounded 
with respect to $j$ due to the continuity of the subdifferential in the sense of~\eqref{continuity}, one obtains~\eqref{B} as desired.

Next, we turn to $ii)$. Let $\eta\in \Scal^{d\times d}$ and observe that for any $\hat{B}\in \partial_2 g(\hat \eps, \eta)$, $x\in\R^d$ and $t\in \R$, 
\begin{align*}
C\geq g(\hat \eps, tx\otimes tx) \geq g(\hat \eps, \eta) + t^2\hat B : (x\otimes x)  - \hat B: \eta. 
\end{align*}
Letting $t\to \infty$ implies $\hat{B} : (x\otimes x) = \hat B x\cdot x\leq 0$. This shows that $\hat{B}$ is negative semi-definite  and 
\begin{align*}
g(\hat \eps, \eta- x\otimes x)\geq g(\hat \eps, \eta) - \hat B :(x\otimes x) \geq g(\hat \eps, \eta)  \quad \text{for every $\eta\in \Scal^{d\times d}$ and $x\in \R^d$.}
\end{align*} 
Then 
for $\eps, \eta\in \Scal^{d\times d}$ and $(\tilde{B}, B) \in \partial g(\eps, \eta)$ and $x\in \R^d$,
\begin{align*}
g(\hat \eps, \eta) \geq g(\hat \eps, \eta+x\otimes x) \geq g(\eps, \eta) + \tilde{B}:(\hat{\eps}-\eps) + B:(x\otimes x).
\end{align*}
Since the left-hand side is independent of $x$, the symmetric matrix $B$ has to be negative semi-definite.
  Thus, $\partial g(\eps, \eta)\subset \Scal^{d\times d}\times \Scal^{d\times d}_-$ for all $\eps, \eta\in \Scal^{d\times d}$, and $ii)$ is an immediate consequence of $i)$. 
  \end{proof}

\section{Symmetric polyconvexity in $2$d}\label{sec:2d}

In this section, we provide a characterization of symmetric polyconvex functions and study symmetric polyaffine functions as well as symmetric polyconvex quadratic forms in 2d. 

\subsection{Characterization of symmetric polyconvexity in 2d}
  The next theorem gives a necessary and sufficient condition for a real function on $\Scal^{2\times 2}$ to be symmetric polyconvex. While a classical polyconvex function can be expressed as a convex function of minors, symmetric polyconvex functions in the 2d case can be represented as convex functions of minors satisfying an additional monotonicity condition in the variable of the determinant.

\begin{theorem} 
\label{prop:charpoly2d}
A function $f:\Scal^{2\times 2}\to\R$ is symmetric polyconvex if and only if there exists a convex function $g:\Scal^{2\times 2}\times \R\to \R$ that is non-increasing with respect to the second variable such that
\begin{align*}
f(\eps) &= g(\eps, \det \eps) \quad \text{for all $\eps\in \Scal^{2\times 2}$.}
\end{align*}
\end{theorem}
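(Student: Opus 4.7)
The plan is to prove both directions by passing between $f$ on $\Scal^{2\times 2}$ and its symmetrized extension $\tilde f=f\circ\pi_2$ on $\R^{2\times 2}$, exploiting the Null-Lagrangian identity $\det F=\det F^s+\det F^a$ from~\eqref{det_2d} together with the $2$d-specific fact that $\det A=\tfrac14(A_{12}-A_{21})^2\geq 0$ for any antisymmetric $A\in\R^{2\times 2}$.

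For the sufficient direction, given $g$ convex and non-increasing in the second argument with $f(\eps)=g(\eps,\det\eps)$, I would set
\begin{align*}
G(F,s):=g(F^s,\,s-\det F^a),\qquad F\in\R^{2\times 2},\ s\in\R.
\end{align*}
The identity~\eqref{det_2d} yields $G(F,\det F)=g(F^s,\det F^s)=\tilde f(F)$, so $G$ is a polyconvex representation of $\tilde f$ provided it is jointly convex. Convexity is a standard composition argument: $F\mapsto F^s$ is linear and $F\mapsto\det F^a$ is a convex quadratic in $F$, hence $(F,s)\mapsto(F^s,\,s-\det F^a)$ has a linear first component and a concave second component, and composing with a convex function that is non-increasing in its second argument preserves convexity.

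For the harder necessary direction, I start from a (real-valued) convex $G:\R^{2\times 2}\times\R\to\R$ with $G(F,\det F)=f(F^s)$, provided by polyconvexity of $\tilde f$. Restricting $G$ to $\Scal^{2\times 2}\times\R$, the main step is to establish the pointwise comparisons
\begin{align*}
G(\eps,s)\leq f(\eps)\ \text{ for }s\geq\det\eps,\qquad G(\eps,s)\geq f(\eps)\ \text{ for }s\leq\det\eps.
\end{align*}
For the upper bound, given $s\geq\det\eps$ the sign constraint on $\det A$ lets me pick antisymmetric $A$ with $\det A=s-\det\eps$; then $G(\eps\pm A,s)=G(\eps\pm A,\det(\eps\pm A))=f(\eps)$, so joint convexity applied to $\eps=\tfrac12(\eps+A)+\tfrac12(\eps-A)$ yields $G(\eps,s)\leq f(\eps)$. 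For the lower bound I reflect $s$ across $\det\eps$: setting $s':=2\det\eps-s\geq\det\eps$, the upper bound applies at $s'$, and one-dimensional convexity of $r\mapsto G(\eps,r)$ at $\det\eps=\tfrac12(s+s')$ gives $f(\eps)\leq\tfrac12 G(\eps,s)+\tfrac12 f(\eps)$, whence $G(\eps,s)\geq f(\eps)$.

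Once these bounds are in place, the desired characterizing function is simply
\begin{align*}
g(\eps,t):=\inf_{s\leq t}G(\eps,s),\qquad \eps\in\Scal^{2\times 2},\ t\in\R,
\end{align*}
which is non-increasing in $t$ by construction and convex by a routine inf-convolution argument from joint convexity of $G$. The two comparisons force $g(\eps,\det\eps)=f(\eps)$: taking $s=\det\eps$ gives ``$\leq$'' and the lower bound on $G(\eps,\cdot)$ over $(-\infty,\det\eps]$ gives ``$\geq$''; finite-valuedness follows from the same lower bound together with continuity of $r\mapsto G(\eps,r)$ on bounded intervals. The conceptual obstacle is the necessary direction, where one must recover the monotonicity built into $g$ from a polyconvex $G$ carrying no a priori monotonicity; both the Null-Lagrangian decomposition of $\det F$ and the non-negativity of $\det F^a$ in $d=2$ are essential here, and neither has a direct analogue in $d=3$, consistent with the more intricate three-dimensional characterization stated later in the paper.
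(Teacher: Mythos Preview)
Your proof is correct. The sufficiency direction is identical to the paper's, but in the necessity direction you take a genuinely different route.

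The paper simply sets $g:=\tilde g|_{\Scal^{2\times 2}\times\R}$ (your $G$ restricted) and shows this restriction is already non-increasing in the second argument. It does so by establishing your upper bound only at the single point $\eps=0$, i.e.\ $g(0,t)\leq f(0)$ for $t\geq 0$, and then invoking the auxiliary subdifferential result Lemma~\ref{lem:aux}\,$iv)$ to propagate the monotonicity to all $(\eps,t)$. By contrast, you prove the upper bound $G(\eps,s)\leq f(\eps)$ for $s\geq\det\eps$ directly at \emph{every} $\eps$ (via the same averaging of $F$ and $F^T$, which is what your $\eps\pm A$ amounts to), add a reflected lower bound, and then build a new $g$ by the inf-convolution $g(\eps,t)=\inf_{s\leq t}G(\eps,s)$. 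Your argument is more elementary and self-contained in that it avoids the subdifferential machinery of Lemma~\ref{lem:aux}; the paper's argument is shorter once that lemma is available.

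One remark that streamlines your version: the lower bound and the infimum construction are in fact unnecessary. Your upper bound already gives, for each fixed $\eps$, that the convex function $r\mapsto G(\eps,r)$ satisfies $G(\eps,r)\leq G(\eps,\det\eps)$ for all $r\geq\det\eps$; a one-variable convex function bounded above on a right half-line by its value at the endpoint is non-increasing on all of $\R$. Hence $G|_{\Scal^{2\times 2}\times\R}$ is itself non-increasing in the second variable, and your $g$ coincides with it.
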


\begin{proof}
For the proof of sufficiency, let $f(\eps)=g(\eps, \det \eps)$ with $g$ as in the statement.  
Then, along with~\eqref{det_2d}, we obtain for $F\in \R^{2\times 2}$ that
\begin{align*}
\tilde{f}(F) = f(F^s)=g(F^s, \det F^s) = g(F^s, - \tfrac{1}{4}(F_{12}-F_{21})^2 + \det F) = \tilde{g}(F, \det F),
\end{align*}
where $\tilde{g}: \R^{2\times 2} \times \R\to \R$ is given as
\begin{align*}
\tilde{g}(F, t) = g(F^s, -\tfrac{1}{4}(F_{12}-F_{21})^2 + t).
\end{align*} 

By definition, the function $\tilde{g}$ is the composition of $g$ with a map that is linear in the first and concave in the second component. The fact that $g$ is convex and non-increasing in its second argument shows that $\tilde{g}$ is convex. Hence, $\tilde{f}$ is polyconvex, which yields the symmetric polyconvexity of $f$.  
 
To prove the reverse implication let $f:\Scal^{2\times 2}\to\R$ be symmetric polyconvex. Then, $\tilde{f}$ is polyconvex and  
there exists 
$\tilde{g}:\R^{2\times 2}\times\R\to\R$ convex such that 
\begin{align*}
\tilde{f}(F) =\tilde{g}(F,\det F), \qquad F\in \R^{2\times 2}.
\end{align*}
Defining $g:\Scal^{2\times 2}\times \R\to \R$ as the restriction of $\tilde{g}$ to $\Scal^{2\times 2}$ in the first variable, i.e.~$g=\tilde{g}|_{\Scal^{2\times 2}\times \R}$, $g$ is convex and $f(\eps)=g(\eps, \det \eps)$ for all $\eps\in \Scal^{2\times 2}$. It remains to show that $g$ is non-increasing in the second argument.

For $t\geq 0$, let 
\begin{align*}
F_t=\sqrt{t} (e_2\otimes e_1 -e_1\otimes e_2) = \begin{pmatrix} 0& -\sqrt{t} \\ \sqrt{t} & 0 \end{pmatrix}, 
\end{align*}
then $F_t^s=0$ and $\det F_t=t$, and we conclude from the convexity of $\tilde{g}$ that 
\begin{align}\label{eq45}
\begin{split}
 g(0,t)   & = \tilde g(F_t^s,\det F_t) = \tilde{g}(\tfrac{1}{2}F_t+\tfrac{1}{2}F_t^T, \tfrac12\det F_t + \tfrac12 \det F_t^T)  \\ & \le \frac{1}{2} \tilde{g}(F_t,\det F_t)+ \frac{1}{2} \tilde{g}(F_t^T,\det F_t^T) = \frac{1}{2} \tilde{f}(F_t) + \frac{1}{2}\tilde{f}(F_t^T) = f(F_t^s) = f(0).
\end{split}
\end{align}
For the equality before the last, we have used that $\tilde{f}(F) = f(F^s) = \tilde{f}(F^T)$ for all $F\in \R^{2\times 2}$. 
Hence,    the uniform bound in~\eqref{eq45} for $t\geq 0$ together with Lemma~\ref{lem:aux}\; $iii)$ yields that $g$ is non-increasing in the second variable. 
\end{proof}
\begin{remark}\label{rem:nonuniqueness}
a) Due to Lemma~\ref{lem:aux}\;$i)$, an equivalent way of phrasing the necessary and sufficient condition is that the convex function $g$ satisfies $\partial_2 g(\eps, t)\subset (-\infty, 0]$ for all $\eps\in \Scal^{2\times 2}$ and $t\in \R$.   

b) We point out that the representation of a function $f:\Scal^{2\times 2}\to \R$ in terms of a convex function of $\eps$ and $\det\eps$ is in general not unique. For instance, let 
\begin{align*}
g(\eps, t) = ({\rm tr\,}\eps)^2 - t\quad \text{ and }\quad h(\eps, t) = |\eps|^2 + t
\end{align*}
for $\eps\in \Scal^{2\times 2}$ and $t\in \R$, and consider
\begin{align}\label{rep33}
f(\eps)=g(\eps, \det \eps) = ({\rm tr\,} \eps)^2-\det\eps, \quad \eps\in \Scal^{2\times 2}.
\end{align} 
Because of the identity
\begin{align*}
({\rm tr\,} \eps)^2-\det\eps = \eps_{11}^2 + \eps_{22}^2 +\eps_{11}\eps_{22} +\eps_{12}^2= \abs{\eps}^2+\det\eps, 
\end{align*}
it follows that $g(\eps, \det \eps)=h(\eps, \det \eps)$ for all $\eps\in \Scal^{2\times 2}$. Hence also,
 \begin{align}\label{rep44}
f(\eps)=h(\eps, \det \eps) = \abs{\eps}^2+\det\eps, \quad \eps\in \Scal^{2\times 2},
\end{align}  
even though $h\neq g$;
in particular, $g$ is decreasing in $t$, while $h$ is increasing in $t$. 
In view of~\eqref{rep33}, Theorem~\ref{prop:charpoly2d} clearly shows that function $f$ is symmetric polyconvex, whereas Theorem~\ref{prop:charpoly2d} does not allow for an immediate conclusion when looking at~\eqref{rep44}. 

Generally speaking, it depends on the way a function $f:\Scal^{2\times 2}\to \R$ is given, whether deciding about symmetric polyconvexity of $f$ is immediate or not directly obvious. 
If, however, $f$ can be expressed as a convex function depending on $\det \eps$ only, then Corollary~\ref{cor:char2d} below gives a simple criterion. 
\end{remark}

%

In Example~\ref{ex:pmdet}, we convinced ourselves that $\eps\mapsto - \det \eps$ is symmetric polyconvex, whereas $\eps\mapsto \det \eps$ is not. As a consequence of Theorem~\ref{prop:charpoly2d}, the following more general result can be obtained.

\begin{corollary}\label{cor:char2d}
Let $h:\R\to \R$ and let $f:\Scal^{2\times 2}\to \R$ be given by 
\begin{align*}
f(\eps) = h(\det \eps), \quad \eps\in \Scal^{2\times 2}.
\end{align*}
Then $f$ is symmetric polyconvex if and only if $h$ is convex and non-increasing. 
\end{corollary}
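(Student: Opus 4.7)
My plan is to prove both directions, with the forward (sufficiency) implication being immediate from Theorem~\ref{prop:charpoly2d} and the reverse (necessity) implication relying on the weaker property of symmetric rank-one convexity already implied by symmetric polyconvexity via~\eqref{implications}. For sufficiency, I would define $g:\Scal^{2\times 2}\times\R\to\R$ by $g(\eps,t) := h(t)$. Since $g$ is independent of its first argument and equals the convex function $h$ in its second argument, $g$ is convex on the product space, and it is non-increasing in $t$ by assumption on $h$. Theorem~\ref{prop:charpoly2d} then yields symmetric polyconvexity of $f(\eps) = g(\eps,\det\eps) = h(\det\eps)$.

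For necessity, I would work directly with the characterization in Proposition~\ref{theo:characterization}\,$ii)$: symmetric polyconvexity implies that $t\mapsto f(\eps + t a\odot b)$ is convex for every $\eps\in\Scal^{2\times 2}$ and $a,b\in\R^2$. To establish convexity of $h$, I would take the compatible line $\eps(t) = e_2\otimes e_2 + t\, e_1\otimes e_1 = \mathrm{diag}(t,1)$, which is of the form $\eps_0 + t a\odot b$ with $a=b=e_1$ and has $\det\eps(t) = t$. Then $t\mapsto f(\eps(t)) = h(t)$ is convex on $\R$.

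The main obstacle is establishing that $h$ is non-increasing, and the key idea is to exploit a compatible line whose determinant is strictly decreasing, something unavailable in the classical rank-one setting because $\det(a\otimes b)=0$ whereas $\det(a\odot b)$ can be strictly negative (cf.\ Example~\ref{ex:pmdet}). Concretely, for arbitrary $c\in\R$ I would consider the compatible line through $cI$ in the direction $e_1\odot e_2$, giving
\begin{equation*}
\eps(t) = \begin{pmatrix} c & t/2 \\ t/2 & c \end{pmatrix}, \qquad \det\eps(t) = c^2 - \tfrac{1}{4}t^2.
\end{equation*}
By symmetric rank-one convexity, the map $\phi(t) := f(\eps(t)) = h(c^2 - t^2/4)$ is convex on $\R$; it is also even. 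Any convex even function is non-decreasing on $[0,\infty)$, since for $0\le t_2 \le t_1$ one writes $t_2$ as a convex combination of $\pm t_1$ and uses $\phi(-t_1)=\phi(t_1)$ to get $\phi(t_2)\le\phi(t_1)$. Reparametrizing via $s = c^2 - t^2/4$ shows $h$ is non-increasing on $(-\infty,c^2]$, and letting $c\to\infty$ yields the conclusion on all of $\R$.
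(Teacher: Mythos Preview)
Your proof is correct, but the route for the necessity direction is genuinely different from the paper's. The paper applies Theorem~\ref{prop:charpoly2d} to obtain a convex, second-variable non-increasing $g$ with $g(\eps,\det\eps)=h(\det\eps)$, and then invokes Lemma~\ref{lem:identical2} (proved via the convex-hull identity~\eqref{convexhull_2d}) to conclude that $g$ is constant in $\eps$ and $h=g(0,\cdot)$, from which convexity and monotonicity of $h$ are read off. You instead descend to the weaker notion of symmetric rank-one convexity and test along two explicit compatible lines: $\mathrm{diag}(t,1)$ to get convexity of $h$, and $cI+t\,e_1\odot e_2$ (whose determinant is $c^2-t^2/4$) to get monotonicity via the even-convex argument. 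The paper's approach keeps the corollary genuinely a corollary of the structural machinery already in place, and Lemma~\ref{lem:identical2} is reusable (its analogue Lemma~\ref{lem:identical3} is needed in Section~\ref{sec:3d}). Your approach is more elementary and self-contained, and in fact proves a slightly sharper statement: since your necessity argument uses only symmetric rank-one convexity, you establish that for functions of the special form $\eps\mapsto h(\det\eps)$ on $\Scal^{2\times 2}$, symmetric rank-one convexity, symmetric quasiconvexity and symmetric polyconvexity all coincide.
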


Indeed, in light of the next lemma the proof follows immediately.
\begin{lemma}\label{lem:identical2}
Let $g:\Scal^{2\times 2}\times \R\to \R$ be convex and $h:\R\to \R$.
If
\begin{align*}
g(\eps, \det \eps) = h(\det \eps)\quad \text{for all $\eps\in \Scal^{2\times 2}$,}
\end{align*}
then $g$ is constant in its first variable and $h(t)=g(0, t)$ for  every $t\in\R$. 
\end{lemma}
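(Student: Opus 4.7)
The plan is to show, for every fixed $t\in\R$, that the convex function $\eps\mapsto g(\eps,t)$ is identically equal to $h(t)$; both asserted conclusions ($h(t)=g(0,t)$ and the independence of $g$ from its first argument) then follow immediately. By hypothesis, $g(\eps,t)=h(t)$ on the non-empty level set $L_t:=\{\eps\in\Scal^{2\times 2}:\det\eps=t\}$, so the task reduces to propagating this value to all of $\Scal^{2\times 2}$.

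The key step is to verify that $(L_t)^{\rm c}=\Scal^{2\times 2}$ for every $t\in\R$. For $t=0$ this is exactly~\eqref{convexhull_2d}. For $t\neq 0$ and any $\eps_0\in\Scal^{2\times 2}\setminus L_t$, I would pick a test direction $v=\Ibb$ if $\det\eps_0<t$ and $v={\rm diag}(1,-1)$ if $\det\eps_0>t$, so that $\det v$ and $\det\eps_0-t$ carry opposite signs. Expanding
\[
\det(\eps_0+\lambda v)-t=(\det v)\,\lambda^{2}+(\cof\eps_0:v)\,\lambda+(\det\eps_0-t),
\]
the two roots $\lambda_-<0<\lambda_+$ of this univariate quadratic straddle zero, because their product $(\det\eps_0-t)/\det v$ is negative (and the discriminant is thus automatically positive); hence $\eps_0$ is a strict convex combination of the two elements $\eps_0+\lambda_\pm v\in L_t$, proving the convex-hull claim.

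Once the convex hull is identified, convexity of $g$ in its first argument applied to any representation $\eps=\sum_i\mu_i\eps_i$ with $\eps_i\in L_t$ yields $g(\eps,t)\le\sum_i\mu_i h(t)=h(t)$ for every $\eps\in\Scal^{2\times 2}$. Since $g(\cdot,t)$ is then a finite convex function on $\Scal^{2\times 2}\cong\R^{3}$ that is bounded above by $h(t)$ and attains this value on $L_t\neq\emptyset$, the classical fact that a finite convex function on $\R^{n}$ which is bounded from above must be constant (reducing by restriction to each line to the one-dimensional statement, where a convex function on $\R$ bounded above is trivially constant) forces $g(\cdot,t)\equiv h(t)$. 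Both conclusions of the lemma follow.

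The principal obstacle is the convex-hull identification in the second paragraph: unlike the case $t=0$, the non-zero level sets $L_t$ are not closed under scalar multiplication, so the linear-span shortcut used in~\eqref{convexhull_2d} is unavailable. The chord argument above succeeds precisely because $\det$ is an indefinite quadratic form on $\Scal^{2\times 2}$, providing test directions of either sign of determinant.
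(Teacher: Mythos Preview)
Your proof is correct, but takes a genuinely different route from the paper's. You establish the convex-hull identity $(L_t)^{\rm c}=\Scal^{2\times 2}$ for \emph{every} $t\in\R$ via the chord argument, then invoke the elementary fact that a finite convex function on $\R^n$ bounded from above is constant. The paper, by contrast, only uses the case $t=0$ (i.e.\ \eqref{convexhull_2d}) to obtain $g(\eps,0)\le h(0)$ for all $\eps$, and then appeals to Lemma~\ref{lem:aux}\,$iv)$---applied with the roles of the two arguments swapped---to conclude that $g$ is both non-increasing and non-decreasing, hence constant, in the first variable; the second assertion then follows from surjectivity of $\det$ onto $\R$.

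Your approach is more self-contained and elementary: it bypasses the subdifferential machinery of Lemma~\ref{lem:aux} entirely, at the price of the extra computation needed to handle the level sets $L_t$ for $t\neq 0$ (where, as you correctly note, the linear-span shortcut is unavailable and one must exploit the indefiniteness of $\det$ on $\Scal^{2\times 2}$). The paper's argument, on the other hand, is shorter once Lemma~\ref{lem:aux} is in hand, and that lemma is reused elsewhere in the paper (e.g.\ in Theorem~\ref{prop:charpoly2d} and Lemma~\ref{lem:identical3}), so the investment pays off. Both arguments ultimately rest on the same principle---that a convex function bounded above on a full-dimensional set must be constant---but you apply it slice by slice in $t$, whereas the paper leverages joint convexity to propagate from a single slice.
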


\begin{proof}
In order to show that $g=g(\eps, t)$ is constant in $\eps$ for any $t\in\R$ , we use~\eqref{convexhull_2d} to express $\eps\in\Scal^{2\times 2}$ as the convex combination of two symmetric matrices $\eps_1, \eps_2$ with vanishing determinant, i.e.~$\eps=\lambda \eps_1 + (1-\lambda) \eps_2$ with $\lambda\in [0,1]$. Then, by the convexity of $g$,
\begin{align*}
g(\eps, 0) \leq \lambda g(\eps_1, 0) + (1-\lambda)g(\eps_2, 0) = h(0) \quad\text{ for any } \eps \in \Scal^{2\times 2}.
\end{align*}
 The independence of $g$ of the first argument follows from Lemma~\ref{lem:aux}\;$iv)$. 

To prove the second part of the statement, observe that for every $t\in \R$ one can find a symmetric matrix whose determinant equals $t$. 
\end{proof}

At the end of the section, we turn in more detail to two classes of polyconvex functions.

\subsection{Symmetric polyconvex quadratic forms.} \label{sec:spc2}
In two dimensions, the three classes of symmetric polyconvex, quasiconvex, and rank-one convex quadratic forms are identical. In view of Definition~\ref{def:sqc}, this is a consequence of the corresponding well-known property of classical semi-convex quadratic forms~\cite[Theorem~5.25]{Dac08}. 
The following characterization constitutes a refinement for the symmetric case of a result by Marcellini~\cite[Eqn.\,(11)]{Mar84}, see also~\cite[Lemma~5.27]{Dac08}.  
 
\begin{proposition}[Characterization of symmetric polyconvex quadratic forms in 2d]\label{prop:quadforms2d}
Let $f:\Scal^{2\times 2}\to\R$ be a quadratic form. Then the three following statements are equivalent:
\begin{itemize}
\item[$i)$] f is symmetric polyconvex;
\item[$ii)$] there exist $\alpha\geq 0$ and a convex quadratic form $h:\Scal^{2\times 2}\to \R$ such that 
$$
f(\eps)=h(\eps)-\alpha\det\eps \quad \text{for all $\eps\in \Scal^{2\times 2}$;}
$$
\item[$iii)$] there exists $\alpha\geq 0$ such that
\begin{align*}
f(\eps) +\alpha \det\eps\geq 0 \quad \text{for all $\eps\in \Scal^{2\times 2}$.}
\end{align*}
\end{itemize}
\end{proposition}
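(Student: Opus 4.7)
The plan is to close a cycle by establishing (ii) $\Leftrightarrow$ (iii) directly, (ii) $\Rightarrow$ (i) via Theorem~\ref{prop:charpoly2d}, and (i) $\Rightarrow$ (iii) via reduction to the classical Marcellini characterization of polyconvex quadratic forms on $\R^{2\times 2}$. The equivalence (ii) $\Leftrightarrow$ (iii) is immediate: for any $\alpha\geq 0$ the combination $h:=f+\alpha\det$ is again a quadratic form on $\Scal^{2\times 2}$, and a quadratic form is convex if and only if it is everywhere nonnegative, so the existence of a convex quadratic $h$ with $f=h-\alpha\det$ is literally the same statement as $f+\alpha\det\geq 0$.

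For (ii) $\Rightarrow$ (i), I would introduce $g:\Scal^{2\times 2}\times\R\to\R$ defined by $g(\eps,t):=h(\eps)-\alpha t$. This $g$ is convex as the sum of a convex function of $\eps$ and a linear (hence convex) function of $t$, and, since $\alpha\geq 0$, it is non-increasing in its second variable. Since $f(\eps)=g(\eps,\det\eps)$, Theorem~\ref{prop:charpoly2d} immediately yields that $f$ is symmetric polyconvex.

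The main step is (i) $\Rightarrow$ (iii). Since $f$ is symmetric polyconvex, its canonical extension $\tilde f=f\circ\pi_2:\R^{2\times 2}\to\R$ is polyconvex, and being the composition of a quadratic form with a linear map it is itself a quadratic form on $\R^{2\times 2}$. I would then invoke the classical result of Marcellini (cf.~\cite[Lemma~5.27]{Dac08} and the references therein), which states that every polyconvex quadratic form on $\R^{2\times 2}$ admits a decomposition $\bar h-\alpha\det$ with $\alpha\geq 0$ and $\bar h$ a convex quadratic form. Evaluating this identity on a symmetric matrix $\eps$ gives $f(\eps)+\alpha\det\eps=\bar h(\eps)\geq 0$ for every $\eps\in\Scal^{2\times 2}$, which is (iii); note that the restriction of $\bar h$ to $\Scal^{2\times 2}$ is automatically a convex quadratic form on $\Scal^{2\times 2}$, supplying the $h$ required in (ii) as well.

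The main obstacle is the appeal to Marcellini's result as a black box. A self-contained alternative would start from the symmetric rank-one convexity inequality $f(a\odot b)\geq 0$ for all $a,b\in\R^2$, which holds by~\eqref{implications} and~\eqref{rankform}, and then solve a small semidefinite feasibility problem on the three-dimensional space $\Scal^{2\times 2}$ to produce the coefficient $\alpha$; this can be done via a direct spectral argument, but invoking Marcellini keeps the proof concise and makes the "refinement" character of the statement transparent.
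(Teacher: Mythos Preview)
Your argument for (ii) $\Leftrightarrow$ (iii) and for (ii) $\Rightarrow$ (i) matches the paper's. The gap is in your (i) $\Rightarrow$ (iii): you misquote the classical result. Marcellini's characterization (equivalently \cite[Lemma~5.27]{Dac08}) only yields $\alpha\in\R$, not $\alpha\geq 0$. In fact the sign cannot be forced in the classical setting: $F\mapsto \det F$ is a polyconvex quadratic form on $\R^{2\times 2}$, yet writing $\det F = \bar h(F) - \alpha\det F$ with $\bar h$ convex quadratic means $\bar h(F)=(1+\alpha)\det F$, and since $\det$ is indefinite this forces $\alpha=-1<0$. So the step where you say ``with $\alpha\geq 0$'' smuggles in exactly the refinement the proposition is asserting.

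What is missing is an argument that \emph{uses the symmetry} of $\tilde f$ to rule out $\alpha<0$. The paper does this as follows: from $\tilde f(F)+\alpha\det F=\tilde h(F)$ with $\tilde h$ convex, set $h=\tilde h|_{\Scal^{2\times 2}}$ and use $\det F=\det F^s+\det F^a$ together with $\tilde f(F)=f(F^s)$ to compute
\[
\alpha\det F^a \;=\; \tilde h(F) - \tilde f(F) + f(F^s) - h(F^s) \;=\; \tilde h(F) - h(F^s).
\]
Since the left-hand side depends only on $F^a$, this equals $\tilde h(F^a)$, which is convex as the composition of a convex function with the linear projection $F\mapsto F^a$. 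But $\alpha\det F^a=\tfrac{\alpha}{4}(F_{12}-F_{21})^2$ is convex if and only if $\alpha\geq 0$, which closes the argument. Your alternative ``self-contained'' route via the rank-one inequality $f(a\odot b)\geq 0$ would also need to manufacture such a sign argument; as stated it remains only a sketch.
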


\begin{proof}
Since a quadratic form is convex if and only if it is non-negative, $ii)$ and $iii)$ are equivalent.
The implication ``$ii)\Rightarrow i)$'' is an immediate consequence of Theorem~\ref{prop:charpoly2d}.
To see that $i)$ implies $iii)$, we use that
by \cite[Lemma~5.27, p.192]{Dac08} there exists $\alpha \in\R$ such that $\tilde f(F) + \alpha \det F\geq 0$, $F\in \R^{2\times 2}$. Hence, $\tilde{h}:\R^{2\times 2}\to \R$ defined by $\tilde h(F) = \tilde f(F) + \alpha \det F$ is quadratic and thus convex. Note that in particular, $f(\eps) + \alpha \det \eps\geq 0$ for all $\eps\in \Scal^{2\times 2}$.

Next we will show that in the symmetric setting we indeed obtain that $\alpha\geq 0$.  The argument is based on the observation that the quadratic form $q_\alpha:\R^{2\times 2}\to \R$ defined for $\alpha\in \R$ by
\begin{align*}
q_\alpha(F) =\alpha \det F^a = \frac{\alpha}{4}(F_{12} -F_{21})^2
\end{align*} 
is convex if and only if $\alpha\geq 0$. 
To show that $q_\alpha$ is convex, 
we infer in view of~\eqref{det_2d} that 
\begin{align*}
q_\alpha(F)=  \alpha (\det F-\det F^s) = \tilde{h}(F) - \tilde{f}(F) +  f(F^s) - h(F^s)= \tilde{h}(F) -  h(F^s)
\end{align*}
for all $F\in \R^{2\times 2}$. Exploiting that $q_\alpha(F)=q_\alpha(F^a)$ then leads to
\begin{align*}
q_\alpha(F) = \tilde{h}(F^a), \quad F\in \R^{2\times 2}.
\end{align*}
Since the composition of a convex with a linear function is always convex, $q_\alpha$ is convex, and hence, $\alpha\geq 0$ as asserted in $iii)$.
\end{proof}

\subsection{Symmetric polyaffine functions}\label{subsec:polyaffine}
 If $f:\Scal^{2\times 2}\to \R$ is symmetric polyaffine, that is both $f$ and $-f$ are symmetric polyconvex, then according to Theorem~\ref{prop:charpoly2d}, there exist two convex functions $g_+, g_-$ that are non-increasing in their second argument such that
\begin{align*}
f(\eps) = g_+(\eps, \det\eps) = -g_-(\eps, \det \eps), \quad \eps\in \Scal^{2\times 2}.
\end{align*}
We apply Lemma~\ref{lem:identical2} with $g=g_++g_-$ and $h$ the zero function to find that $g_-=-g_+$. Hence, $g_-$ is both non-increasing and non-decreasing, and therefore constant, with respect to the last variable. Due to the convexity of $g_+(\eps, 0)$ and $g_-(\eps, 0)$, $f$ has to be affine.

Summing up, we observe that $f$ is symmetric polyaffine if and only if it is affine. 


\section{Symmetric polyconvexity in $3$d}\label{sec:3d}

This section is devoted to a discussion of symmetric polyconvex functions in 3d. After providing a characterization of symmetric polyconvexity, we subsequently discuss a few examples and two important subclasses of symmetric polyconvex functions in three dimensions, these are symmetric polyconvex quadratic forms and symmetric polyaffine functions.

\subsection{Characterization of symmetric polyconvexity in $3$d}

The following theorem constitutes the main result of this paper.

\begin{theorem}[Characterization of symmetric polyconvexity in 3d]\label{theo:characterization3d}
A function  $f:\Scal^{3\times 3}\to\R$ is symmetric polyconvex if and only if  there exists a convex function $g:\Scal^{3\times 3}\times \Scal^{3\times 3} \to \R$
 such that $\partial_2 g (\eps, \eta)\subset \Scal^{3\times 3}_-$ for every $\eps, \eta \in \Scal^{3\times 3}$   and
 \begin{equation}\label{representation}
f(\eps) = g(\eps, \cof \eps) \quad \text{for all } \eps\in \Scal^{3\times 3}.
\end{equation}
\end{theorem}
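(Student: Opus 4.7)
The theorem is an iff, so I will treat the two directions separately. The sufficiency direction is conceptually cleaner, so I will present it first.

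\textbf{Sufficiency.} Assume $g$ convex with $\partial_2 g \subset \Scal^{3\times 3}_-$ and $f(\eps)=g(\eps,\cof\eps)$. I want to show $\tilde f(F):=f(F^s)$ is polyconvex. The natural candidate is
\[
h(F,M):=g\bigl(F^s,\,M^s-\cof F^a\bigr),\qquad F,M\in\R^{3\times 3}.
\]
Using the identity $\cof F^s=(\cof F)^s-\cof F^a$ (equation \eqref{cof_3d}), one gets $h(F,\cof F)=g(F^s,\cof F^s)=f(F^s)=\tilde f(F)$. Convexity of $h$ is the main point. Given $(F_i,M_i)$ and $\lambda\in(0,1)$ with barred averages, write $\cof F_i^a=x_i\otimes x_i$ (equation \eqref{cofFa}) and compute
\[
\cof\bar F^a=(\lambda x_1+(1-\lambda)x_2)\otimes(\lambda x_1+(1-\lambda)x_2)=\lambda\cof F_1^a+(1-\lambda)\cof F_2^a-P,
\]
where $P:=\lambda(1-\lambda)(x_1-x_2)\otimes(x_1-x_2)\in\Scal^{3\times 3}_+$. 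Consequently
\[
\bar M^s-\cof\bar F^a=\lambda\eta_1+(1-\lambda)\eta_2+P,\qquad \eta_i:=M_i^s-\cof F_i^a.
\]
The condition $\partial_2 g\subset\Scal^{3\times 3}_-$ forces $g$ to be non-increasing in the $\Scal^{3\times 3}_+$-direction (apply the subgradient inequality at $\eta+P$: for $B\in\partial_2g(\eps,\eta+P)$, $g(\eps,\eta)\geq g(\eps,\eta+P)-B:P$ and $B:P\leq 0$), so $g(\bar F^s,\lambda\eta_1+(1-\lambda)\eta_2+P)\leq g(\bar F^s,\lambda\eta_1+(1-\lambda)\eta_2)$. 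Combining with convexity of $g$ yields $h(\bar F,\bar M)\leq\lambda h(F_1,M_1)+(1-\lambda)h(F_2,M_2)$. Extending $h$ trivially to depend on $t$ exhibits $\tilde f$ as polyconvex.

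\textbf{Necessity.} Assume $\tilde f(F)=\tilde\Phi(F,\cof F,\det F)$ with $\tilde\Phi$ convex. My plan has three steps.

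\emph{Step 1 (Jensen averaging).} For $x\in\R^3$ let $G(x)$ be the antisymmetric matrix with $\cof G(x)=x\otimes x$. Then $(\eps+G(x))^s=\eps$, and a direct computation using the bilinear decomposition of $\cof$ gives $\cof(\eps+G(x))=\cof\eps+C(\eps,G(x))+x\otimes x$ with $C(\eps,\,\cdot\,)$ linear, and $\det(\eps+G(x))=\det\eps+x\otimes x:\eps$. For any symmetric probability measure $\mu$ on $\R^3$ with $\int x\otimes x\,d\mu=P\in\Scal^{3\times 3}_+$, Jensen's inequality applied to $\tilde\Phi$ along the curve $x\mapsto(\eps+G(x),\cof(\eps+G(x)),\det(\eps+G(x)))$ yields the key inequality
\[
\tilde\Phi(\eps,\cof\eps+P,\det\eps+P:\eps)\leq\int\tilde f(\eps+G(x))\,d\mu(x)=f(\eps) \qquad(\ast)
\]
for every $\eps\in\Scal^{3\times 3}$ and every $P\in\Scal^{3\times 3}_+$.

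\emph{Step 2 (Passing to a $t$-independent representative).} I claim that any polyaffine $L(F,M,t)=A:F+B:M+ct+d$ satisfying $L(F,\cof F,\det F)\leq\tilde f(F)$ for all $F$ must have $c=0$. Indeed, evaluating along $F=\eps+\tau G(x)$ gives $L(F,\cof F,\det F)=L_0+\tau L_1+\tau^2(B+c\eps):x\otimes x\leq f(\eps)$; since this must hold for all $\tau\in\R$, $\eps\in\Scal^{3\times 3}$, $x\in\R^3$, the coefficient of $\tau^2$ is $\leq 0$ uniformly in $\eps$, which (taking $\eps=sI$ with $s\to\pm\infty$) forces $c=0$ and $B^s\leq 0$. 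Since $\tilde f$ is the supremum of its polyaffine minorants, $\tilde f(F)=\sup\{A:F+B:\cof F+d\}$ over admissible $(A,B,d)$ with $B^s\leq 0$. One verifies that restricting further to $A,B\in\Scal^{3\times 3}$ does not change the sup at symmetric $F$ (the antisymmetric parts contribute zero there, and admissibility on $\Scal^{3\times 3}$ plus $B\leq 0$ imply admissibility on all of $\R^{3\times 3}$ via $(\cof F')^s=\cof(F')^s+\cof(F')^a$ with $\cof(F')^a\in\Scal^{3\times 3}_+$). Define
\[
g(\eps,\eta):=\sup\bigl\{A:\eps+B:\eta+d\;:\;A\in\Scal^{3\times 3},\;B\in\Scal^{3\times 3}_-,\;d\in\R,\;A:\eps'+B:\cof\eps'+d\leq f(\eps')\;\forall\eps'\in\Scal^{3\times 3}\bigr\}.
\]
Then $g$ is convex (sup of affine), and from the previous paragraph $g(\eps,\cof\eps)=f(\eps)$.

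\emph{Step 3 (Subgradient condition via Lemma~\ref{lem:aux2}).} For each admissible affine summand $L$, $L(\eps,\eta+P)-L(\eps,\eta)=B:P\leq 0$ for $P\in\Scal^{3\times 3}_+$ since $B\leq 0$, so the sup $g$ is non-increasing along $\Scal^{3\times 3}_+$. In particular, verifying the hypothesis of Lemma~\ref{lem:aux2}\,$ii)$ reduces to producing a pointwise bound: take $\hat\eps=0$ and note that the matrix $F=G(x)$ satisfies $F^s=0$ and $\cof F=x\otimes x$, so by the polyconvex representation applied to $\pm G(x)$ and convexity,
\[
g(0,x\otimes x)\leq\tfrac12\tilde f(G(x))+\tfrac12\tilde f(-G(x))=f(0).
\]
Lemma~\ref{lem:aux2}\,$ii)$ then gives $\partial_2 g(\eps,\eta)\subset\Scal^{3\times 3}_-$ for all $\eps,\eta$.

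\textbf{Anticipated main obstacle.} The delicate point is Step 2: ensuring that the sup defining $g$ is real-valued on $\Scal^{3\times 3}\times\Scal^{3\times 3}$, not $+\infty$. This requires that the admissibility constraint bounds $B$ in the direction of $\eta-\cof\eps$ whenever the latter has a negative semidefinite component. I expect this to follow from the constraint structure (using enough test matrices $\eps'$ in the admissibility condition), or alternatively to be bypassed by choosing a different concrete convex $t$-independent representative of $\tilde f$ built directly from $(\ast)$ via an appropriate infimal convolution. Either way, the finiteness issue is the main technical step that needs careful execution; the rest of the argument is structural and relies on algebraic identities for the cofactor and the monotonicity lemmas from Section~\ref{sec:preliminaries}.
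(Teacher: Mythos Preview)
Your sufficiency argument is correct and in fact more direct than the paper's. You exhibit a concrete convex function $h(F,M)=g(F^s,M^s-\cof F^a)$ with $\tilde f(F)=h(F,\cof F)$; the paper instead verifies the two-condition criterion from \cite[Theorem~5.6]{Dac08} (a convex lower bound plus a Jensen-type inequality for $20$ points), which amounts to the same content but is less explicit.

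Your necessity argument, however, has the gap you yourself identify, and it is a real one. Defining $g$ as a supremum of affine functions with $B\in\Scal^{3\times 3}_-$ does give a convex function with the right monotonicity and with $g(\eps,\cof\eps)=f(\eps)$ (your reduction to symmetric $A,B$ via $(\cof F')^s=\cof(F')^s+\cof(F')^a$ with $\cof(F')^a\geq 0$ is correct), but the finiteness of $g(\eps,\eta)$ for general $\eta$ is not established. It can be done---for instance, by weak duality it suffices to exhibit, for each $(\eps,\eta)$, a probability measure $\mu$ on $\Scal^{3\times 3}$ with $\int\eps'\,d\mu=\eps$ and $\int\cof\eps'\,d\mu\geq\eta$, and this follows from the identity $\tfrac12\cof(\eps+\xi)+\tfrac12\cof(\eps-\xi)=\cof\eps+\cof\xi$ together with the observation that convex combinations of $\cof\xi$ over $\xi\in\Scal^{3\times 3}$ can dominate any fixed symmetric matrix---but you have not carried this out, and your Step~1 inequality $(\ast)$ as written does not directly substitute for it (indeed, $(\ast)$ is not actually used in your Steps~2 or~3). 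Note also that once $g$ is finite, your Step~3 is partly redundant: the monotonicity of each affine piece already forces $g(\eps,\eta+P)\leq g(\eps,\eta)$ for $P\geq 0$, and the subgradient inequality at $(\eps,\eta)$ then gives $B:P\leq 0$ for every $B\in\partial_2 g(\eps,\eta)$ and every $P\geq 0$, i.e.\ $\partial_2 g\subset\Scal^{3\times 3}_-$ without appeal to Lemma~\ref{lem:aux2}.

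The paper sidesteps the finiteness issue entirely by taking $g$ to be the \emph{restriction} of the given convex representative $\tilde g$ to $\Scal^{3\times 3}\times\Scal^{3\times 3}\times\R$---which is automatically real-valued---and then proving that this particular $g$ is constant in the determinant variable (via a specific family of test matrices $F_{t,\delta}=\delta e_3\otimes e_3+\sqrt{t}(e_1\otimes e_2-e_2\otimes e_1)$ together with Lemma~\ref{lem:aux}), after which the subgradient condition is obtained through Lemma~\ref{lem:aux2} much as in your Step~3. Your route, if completed, would be a legitimate alternative that builds the $t$-independence into $g$ from the outset rather than proving it for a given representative.
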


\begin{proof} We subdivide the proof into the natural two steps, proving first that~\eqref{representation} is necessary for symmetric polyconvexity, and then that it is also sufficient.

{\em Step~1: Necessity.}
Since $f$ is symmetric polyconvex, $\tilde f$ defined as in Definition~\ref{def:sqc} is polyconvex. Hence, there is a convex function $\tilde g: \R^{3\times 3}\times \R^{3\times 3}\times \R\to \R$ such that
\begin{align*}
\tilde{f}(F) = \tilde{g}(F, \cof F, \det F), \quad F\in \R^{3\times 3}.
\end{align*}
Restricting $\tilde g$ in the first two variables to $\Scal^{3\times 3}$ 
gives a convex function $g$ 
defined on $\Scal^{3\times 3}\times \Scal^{3\times 3} \times \R$ that satisfies
\begin{align*}
f(\eps) = g(\eps, \cof \eps, \det \eps)\quad \text{for all $\eps\in \Scal^{3\times 3}$,}
\end{align*}
in view of~\eqref{cof_3d}.

Next, we show that $g$ is constant in the determinant. Then, in Step~1b, the asserted negative semi-definiteness of matrices in the subdifferential with respect to the second variable of $g$ is established.

{\em Step~1a: $g$ is constant in the third variable.} 
We will apply Lemma~\ref{lem:aux}\;$iv)$ to $g$ with $\hat x = (0,0) \in \Scal^{3\times 3} \times \Scal^{3\times 3}$ and $\hat{y} = 0\in \R$.   To prove the uniform upper bound needed there, the estimate 
\begin{align}\label{eq}
g(\eps_\delta, te_3\otimes e_3, \delta t) \leq f(\eps_\delta) \quad \text{for all $t\geq 0$ and $\delta\in \R$, }
\end{align} 
where $\eps_\delta=\delta e_3\otimes e_3$, will turn out useful. Indeed, with
$$F_{t, \delta}=\eps_ \delta+ \sqrt{t}(e_1\otimes e_2-e_2\otimes e_1)
= \begin{pmatrix} 0 & \sqrt{t} & 0\\
- \sqrt{t} & 0 & 0 \\
0 & 0 & \delta
\end{pmatrix}, $$
one has that $F_{t, \delta}^s=\eps_\delta$, $(\cof F_{t, \delta})^s = \frac{1}{2}(\cof F_{t, \delta}+ \cof F_{t, \delta}^T)= t e_3\otimes e_3$, and $\det F_{t, \delta}=\delta t$. 
Exploiting the symmetry and the convexity of $\tilde{g}$ then leads to
\begin{align*}
& g(\eps_\delta, t e_3\otimes e_3, \delta t) = \tilde{g}(F_{t, \delta}^s, (\cof F_{t, \delta})^s, \det F_{t, \delta}) \\ 
&\qquad \leq 
\tfrac{1}{2}\tilde{g}(F_{t, \delta}, \cof F_{t, \delta}, \det F_{t, \delta}) + \tfrac{1}{2} \tilde{g}(F_{t, \delta}^T, \cof F_{t, \delta}^T, \det F_{t, \delta}^T) \\ & \qquad = \tfrac{1}{2} \tilde{f}(F_{t, \delta}) + \tfrac12 \tilde{f}(F_{t, \delta}^T) = \tfrac12 f(F_{t, \delta}^s) + \tfrac12 f(F_{t, \delta}^s) = 
f(F_{t, \delta}^s) = f(\eps_\delta),
\end{align*}
which shows~\eqref{eq}.

Now we use the subdifferentiability of the convex function $g$.
For $r\in \R$, let $(\tilde{B}_r, B_r, b_r) \in \partial g(0, 0, r) \subset \Scal^{3\times 3} \times \Scal^{3\times 3} \times \R$. Then in conjunction with~\eqref{eq} one has that
\begin{align}\label{eqq}
& f(\eps_\delta) \geq g(\eps_\delta, te_3 \otimes e_3, \delta t) \geq g(0, 0, r)  +\tilde{B}_r : \eps_\delta+ B_r : (te_3\otimes e_3) + b_r (\delta t-r) 
\end{align} 
for all $t\geq 0$ and $\delta\in \R$. 
By letting $t$ tend to infinity, we conclude that $B_r : (e_3 \otimes e_3) + b_r\delta\leq 0$ for all $\delta$. Varying $\delta$ in $\R$ implies that $b_r=0$. 
Finally, we set $t=0$ and $\delta=0$ in~\eqref{eqq} to obtain
\begin{align*}
g(0, 0, r) \leq f(0) \quad \text{for all $r\in \R$.}
\end{align*}
 According to Lemma~\ref{lem:aux}\;$i)$ and $iv)$, $g$ is then non-increasing and non-decreasing,   and thus constant, in the last variable. 
For simplicity of notation, this third variable is from now on omitted in the functions $\tilde{g}$ and $g$.

 {\em Step~1b: Partial subdifferential condition for $g$.}
We will show that
\begin{align}\label{tt2}
g(0, x\otimes x)\le g(0, 0) \quad \mbox{ for any } x\in \R^3.
\end{align}
Then  all partial subgradients with respect to the second variable of $g$ are negative semi-definite as a consequence of Lemma~\ref{lem:aux2}\,$ii)$.

To prove~\eqref{tt2}, let $F_{x}=X$ with the anti-symmetric matrix
\begin{align*}
X=\begin{pmatrix} 0 & x_3 & -x_2\\ -x_3 & 0 & x_1 \\ x_2& -x_1 & 0 \end{pmatrix}, \color{black} 
\end{align*}
%
Then, $F_{x}^s=0$ and 
$(\cof F_{x})^s = \frac12(\cof F_{x}+\cof F_{x}^T) = x\otimes x = \cof X$ by~\eqref{cofFa} and~\eqref{cof_3d}. 
Since the function $\tilde g$ is convex, we infer that 
\begin{align*}
g(0, x\otimes x) & =  \tilde g(F_{x}^s,(\cof F_{x})^s) 
\le\tfrac12 \tilde g(F_{x},\cof F_{x})+\tfrac12 \tilde g(F_{x}^T,\cof F_{x}^T)\\ & = \tfrac 12 \tilde f(F_{x}) + \tfrac12 \tilde f(F_{x}^T) = f(F_{x}^s)=g(0,0), \nonumber
\end{align*}
which is~\eqref{tt2}.

{\em Step 2: Sufficiency.} Suppose that $f$ is of the form \eqref{representation} with $g$ as in the statement. 
To prove that $\tilde f$ is polyconvex, it suffices, according to~\cite[Theorem~5.6]{Dac08}, to show the following two conditions:
\begin{itemize}
\item[$(i)$] There exists a convex function $k:\R^{3\times 3}\times \R^{3\times 3}\times \R\to \R$ such that $\tilde{f}(F)\geq k(F, \cof F, \det F)$ for all $F\in \R^{3\times 3}$.
\item[$(ii)$] Let $F,F_i \in \R^{3\times 3}$ and $\lambda_i\in [0,1]$ with $i=1,2,\dots, n=20$ such that 
\begin{align}
\label{eqpol}
\sum_{i=1}^{n} \lambda_i=1,\ \  F=\sum_{i=1}^{n}\lambda_iF_i,\ \  \cof F=\sum_{i=1}^{n}\lambda_i\cof F_i, \quad \text{and}  \ \  \det F=\sum_{i=1}^{n}\lambda_i\det F_i.
\end{align}
Then, $\tilde f(F)\le \sum_{i=1}^n \lambda_i \tilde f(F_i)$.
\end{itemize}

Let $F\in \R^{3\times 3}$. 
  With $(\tilde{B}, B) \in \partial g(0, 0) \subset \partial_1 g(0,0) \times \partial_2 g(0,0)\subset \Scal^{3\times 3} \times \Scal^{3\times 3}_-$,
condition $(i)$ follows from 
\begin{align*}
\tilde{f}(F) & = g(F^s, \cof F^s) \geq g(0, 0)  + \tilde B: F^s+ B: (\cof F)^s - B:\cof F^a \\ &= g(0, 0) + \tilde B: F^s + B : (\cof F)^s + q_{-B}(F) = k(F, \cof F, \det F),
\end{align*}
where $q_{-B}$ is as in \eqref{QA} and \color{black} $k(F, G ,t) = g(0, 0) + \tilde B: F^s + B:G^s + q_{-B}(F)$ for $F, G\in \R^{3\times 3}$, $t\in \R$. Since $-B$ is positive semi-definite, $q_{-B}$ is convex by Lemma~\ref{convexquadra}, and so is $k$.\color{black}  

To prove $(ii)$, let $F$ satisfy \eqref{eqpol} and let $B\in \partial_2 g(F^s, \cof F^s)\subset \Scal^{3\times 3}_-$. Owing to the concavity of $q_B$ (again by Lemma~\ref{convexquadra})
we then find that
\begin{align*}
B:\Bigl(\cof F^a-\sum_{i=1}^n\lambda_i\cof F^a_i\Bigr)=q_B(F)-\sum_{i=1}^n\lambda_iq_B ({F_i})\ge 0.
\end{align*}
Along with the subgradient property of $B$, this implies 
\begin{align*}
\begin{split}
g\Bigl(F^s,\cof F^s+\cof F^a-\sum_{i=1}^n \lambda_i\cof F^a_i\Bigr) & \geq g(F^s,\cof F^s)+B:\Bigr(\cof F^a-\sum_{i=1}^n\lambda_i\cof F^a_i\Bigl) \\& \geq g(F^s, \cof F^s).
\end{split}
\end{align*}
Then, since $\cof F^s+\cof F^a=(\cof F)^s = \sum_{i=1}^n \lambda_i (\cof F_i)^s$ by \eqref{eqpol} and $g$ is convex, we infer that 
\begin{align*}
\tilde f(F) =g(F^s,\cof F^s) &\le g\Bigl(F^s,\cof F^s+\cof F^a-\sum_{i=1}^n\lambda_i\cof F_i^a\Bigr) \\ & =g\Bigl(F^s,\sum_{i=1}^n\lambda_i(\cof F_i)^s-\sum_{i=1}^n\lambda_i\cof F_i^a\Bigr)
=g\Bigl(\sum_{i=1}^n \lambda_iF_i^s,\sum_{i=1}^n\lambda_i\cof F_i^s\Bigr)\\ &
\le\sum_{i=1}^n \lambda_i g(F_i^s,\cof F_i^s)=\sum_{i=1}^n\lambda_i\tilde f(F_i),
\end{align*}

which finishes the proof.
\end{proof}

\begin{remark}\label{remrepre}
a) As a consequence of Theorem~\ref{theo:characterization3d}, any symmetric polyconvex function $f$ satisfies a monotonicity assumption with regard to the diagonal minors. Precisely, there exists a convex function $g:\Scal^{3\times 3}\times \R^3\times \R^3 \to \R$ that is non-increasing in its second argument such that
\begin{align*}
f(\eps) = g(\eps, \cofdiag \eps, \cofoff \eps), \quad \eps\in \Scal^{3\times 3},
\end{align*}
cf.~Section~\ref{subsec:notations} for notations.   In view of~Lemma~\ref{lem:aux}\,$i)$ and $ii)$,
this follows from the fact that diagonal entries of negative semi-definite matrices are always non-positive.

b) Note that a function $f:\Scal^{3\times 3}\to \R$ given as in~\eqref{representation} with a convex function $g$ whose partial subdifferential regarding the second variable is not negative semi-definite may still be symmetric polyconvex. 
Indeed, 
\begin{align*}
f(\eps) =  \eps_{11}^2+ \eps_{22}^2 + 2\eps_{12}^2 + (\cof\eps)_{33}, \quad \eps\in \Scal^{3\times 3},
\end{align*}
which depends increasingly on the diagonal cofactors,
admits an alternative representation that is in accordance with Theorem~\ref{theo:characterization3d}, namely
\begin{align}\label{falter1}
f(\eps) = (\eps_{11} +\eps_{22})^2 - (\cof \eps)_{33}, \quad \eps\in \Scal^{3\times 3}.
\end{align}
Thus, $f$ is in fact symmetric polyconvex. 

c) We emphasize that the representation of a symmetric polyconvex function according to Theorem~\ref{theo:characterization3d} is not unique. To see this, take for example $f$ as in~\eqref{falter1} and observe that it can equivalently be expressed as
\begin{align*}\label{falter2}
f(\eps) = \tfrac{1}{2}(\eps_{11}  + \eps_{22})^2 + \tfrac{1}{2} \eps_{11}^2 + \tfrac{1}{2}\eps_{22}^2 +\eps_{12}^2,\quad \eps\in \Scal^{3\times 3}.
\end{align*} 
In particular, $f$ is even convex. 
\end{remark}

The $3$d analogon of Corollary~\ref{cor:char2d} is based on an auxiliary result extending Lemma~\ref{lem:identical2}.

\begin{lemma}\label{lem:identical3}
Let $g:\Scal^{3\times 3}\times \Scal^{3\times 3}\to \R$ be convex.
\begin{itemize} 
\item[$i)$] If there exists a function $h: \R\to \R$ such that 
\begin{align*}
g(\eps, \cof \eps) = h(\det \eps)\quad \text{for all $\eps\in \Scal^{3\times 3}$,}
\end{align*}
then $h$ is constant. 
\item[$ii)$] If there exists a continuous function $h:\Scal^{3\times 3}\to \R$ satisfying 
\begin{align*}
g(\eps, \cof \eps) = h(\cof \eps)\quad \text{for all $\eps\in \Scal^{3\times 3}$,}
\end{align*}
then $h=g(0, \cdot)$.
\end{itemize} 
\end{lemma}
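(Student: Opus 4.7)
The plan is to exploit, in both parts, that $\cof \eps = 0$ holds exactly when $\rank \eps \leq 1$, which also forces $\det \eps = 0$. This means that, on any rank-$\leq 1$ symmetric matrix, the hypothesis in either part evaluates to $g(\eps, 0) = h(0)$. Since the rank-$\leq 1$ symmetric matrices form a scalar-closed set, the observation used just after~\eqref{convexhull_3d} identifies their convex hull with their linear span, which equals $\Scal^{3\times 3}$ by the spectral decomposition. Convexity of $g$ then yields $g(\eps, 0) \leq h(0)$ for every $\eps$, and since a convex function on $\R^n$ bounded above is constant, I deduce $g(\cdot, 0) \equiv h(0)$.

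For part $i)$, I will next show $g(0, \cdot) \equiv h(0)$. For $s \in \R$ and orthonormal $v_1, v_2 \in \R^3$, setting $v_3 = v_1\times v_2$ and $\eps_s^{\pm} = \pm(s\, v_1\otimes v_1 + v_2 \otimes v_2)$, a direct computation yields $\cof \eps_s^{\pm} = s\, v_3 \otimes v_3$ and $\det \eps_s^{\pm} = 0$, so that $g(\eps_s^{\pm}, s\, v_3 \otimes v_3) = h(0)$. The midpoint of these two points is $(0, s\, v_3\otimes v_3)$, and convexity gives $g(0, s\, v_3\otimes v_3) \leq h(0)$. Varying $s$ and $v_3$ and using once more the span property of rank-$\leq 1$ symmetric matrices, I obtain $g(0, \cdot) \leq h(0)$, and the same bounded-above principle gives $g(0, \cdot) \equiv h(0)$. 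A final midpoint inequality
\[
g(\eps, \eta) \leq \tfrac{1}{2} g(2\eps, 0) + \tfrac{1}{2} g(0, 2\eta) = h(0)
\]
forces $g \equiv h(0)$ on $\Scal^{3\times 3} \times \Scal^{3\times 3}$. Substituting into the hypothesis and using surjectivity of $\det$ (via $\eps = {\rm diag}(t, 1, 1)$, for instance) shows that $h$ is constant.

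For part $ii)$, after obtaining $g(\cdot, 0) \equiv h(0)$ as above, the key additional ingredient is to show that $g$ is independent of its first argument. Convexity gives, for all $\eps, \eta \in \Scal^{3\times 3}$,
\[
g(\eps, \eta) \leq \tfrac{1}{2} g(2\eps, 0) + \tfrac{1}{2} g(0, 2\eta) = \tfrac{1}{2} h(0) + \tfrac{1}{2} g(0, 2\eta),
\]
which is an $\eps$-independent upper bound on the convex function $\eps \mapsto g(\eps, \eta)$. Hence this function is constant in $\eps$, and the constant value must be $g(0, \eta)$. Substituting into the hypothesis yields $h(\cof \eps) = g(\eps, \cof \eps) = g(0, \cof \eps)$ for every $\eps$, that is, $h(\zeta) = g(0, \zeta)$ at every $\zeta$ in the image of $\cof$; since both $h$ and $g(0, \cdot)$ (convex, hence continuous) are continuous on $\Scal^{3\times 3}$, this extends by continuity to the closure of that image.

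I expect the main obstacle to be pinning down the right midpoint estimate $g(\eps, \eta) \leq \tfrac{1}{2} g(2\eps, 0) + \tfrac{1}{2} g(0, 2\eta)$ and exploiting it correctly together with the bounded-above-convex-is-constant principle: in part $i)$ it forces constancy of $g$, whereas in part $ii)$ it forces $g$ to lose its dependence on the first variable. Once these structural conclusions are in place, the remaining substitution, surjectivity, and continuity steps are routine.
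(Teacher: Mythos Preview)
Your argument for part~$i)$ is correct and takes a somewhat different route from the paper. Both begin by showing $g(\cdot,0)\equiv h(0)$ via~\eqref{convexhull_3d}. From there the paper restricts to $g_0:=g(0,\cdot)$, notes that $g_0(te_i\otimes e_i)=g_0\bigl(\cof(te_j\otimes e_j+e_k\otimes e_k)\bigr)=h(0)$ for all $t\in\R$, and invokes Lemma~\ref{lem:aux}\,$iii)$ to conclude that $g_0$ is constant in the three diagonal directions; plugging diagonal $\eps$ into $g_0(\cof\eps)=h(\det\eps)$ then finishes. Your symmetric-pair construction $\eps_s^\pm=\pm(s\,v_1\otimes v_1+v_2\otimes v_2)$ together with the midpoint estimate yields the stronger conclusion that $g$ itself is globally constant, which is a cleaner, more elementary path (no subdifferential lemma needed).

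For part~$ii)$ your approach matches the paper's---show $g$ is independent of its first argument, identify $h$ with $g(0,\cdot)$ on the range of $\cof$, and extend by continuity---and you have correctly isolated the one delicate step: you need the closure of the image of $\cof:\Scal^{3\times 3}\to\Scal^{3\times 3}$ to be all of $\Scal^{3\times 3}$. The paper fills this by writing $\eta=\cof\bigl((\det\eta)^{1/2}\eta^{-1}\bigr)$ for invertible $\eta$ and appealing to density of invertible matrices. However, this formula requires $\det\eta>0$, and in fact the density claim fails: since $\det(\cof\eps)=(\det\eps)^2\ge 0$ for every $\eps$, the image of $\cof$ is contained in the proper closed set $\{\eta\in\Scal^{3\times 3}:\det\eta\ge 0\}$, which excludes e.g.\ ${\rm diag}(1,1,-1)$. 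As a consequence, part~$ii)$ as stated is not true: with $g\equiv 0$ and $h(\eta)=\max\{0,-\det\eta\}$ one has $h$ continuous and $h(\cof\eps)=0=g(\eps,\cof\eps)$ for all $\eps$, yet $h\ne g(0,\cdot)$. What your argument (and the paper's) actually establishes is $h=g(0,\cdot)$ on the closure of the image of $\cof$, namely $\{\eta:\det\eta\ge 0\}$; that is the strongest conclusion available from the hypothesis, so your instinct to stop at ``the closure of that image'' was in fact correct.
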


\begin{proof}
First, we argue that, both in $i)$ and $ii)$, the function $g$ is independent of the first variable. This follows as in the proof of Lemma~\ref{lem:identical2}, just replacing~\eqref{convexhull_2d} with~\eqref{convexhull_3d}. Notice that every matrix with vanishing cofactor has in particular zero determinant.

In the following, let $g_0 = g(0, \cdot):\Scal^{3\times 3}\to \R$. As a consequence of the separate convexity of $g$, the auxiliary function $g_0$ is convex, and we have that $g_0(\cof \eps) = g(0, \cof\eps) = g(\eps, \cof \eps)$ for any $\eps \in \Scal^{3\times 3}$.

As for $i)$, we use Lemma~\ref{lem:aux}\;$iii)$ to infer from the observation that $g_0(te_i\otimes e_i) = g_0(\cof(te_j\otimes e_j + e_k\otimes e_k)) = h(0)$ for all $t\in \R$ and any circular permuation $(ijk)$ of $(123)$ that $g_0$ is constant in the variables corresponding to the diagonal entries. Thus, plugging diagonal matrices into~the identity $g_0(\cof \eps) = h(\det \eps)$ for $\eta\in \Scal^{3\times 3}$ shows that $h$ is constant.

Regarding $ii)$, we observe that the image of the map ${\rm cof}:\Scal^{3\times 3}\to \Scal^{3\times 3}$ contains the set of symmetric invertible matrices. 
Indeed, for any $\eta\in \Scal^{3\times 3}$ with $\det \eta \neq 0$, one finds by~\eqref{Cramer} that
\begin{align*}
\cof \bigl((\det \eta)^{\frac{1}{2}}\eta^{-1}\bigr) = \eta,
\end{align*}
and consequently,
\begin{align*}
 h(\eta) =  h \bigl( \cof \bigl((\det \eta)^{\frac{1}{2}}\eta^{-1}\bigr)\bigr) 
 = g\bigl((\det \eta)^{\frac{1}{2}}\eta^{-1}, \eta\bigr)  = g_0(\eta).
\end{align*}

Finally, since the set of symmetric invertible matrices lies dense in $\Scal^{3\times 3}$, we use the continuity of $h$ and $g_0$ to conclude that $h(\eta)=g_0(\eta)$ for all $\eta\in \Scal^{3\times 3}$, which yields $ii)$.  
\end{proof}

\begin{remark}
a) The statement in Lemma~\ref{lem:identical3} $ii)$ clearly fails, if we do not require $h$ to be continuous. To see this, we first recall that, in the 3d case, the rank of any cofactor matrix is not equal to 2. Indeed, if $\rank\eps=0$ or $\rank\eps=1$, then $\cof\eps =0$, so its rank is 0. If $\rank\eps=2$, we have by Cramer's rule \eqref{Cramer} and Sylvester's rank formula that $\rank \cof \eps + \rank \eps\leq 3$ 
and hence, $\rank\cof\eps \leq1$. Finally, if $\eps$ is invertible, so is $\cof\eps$ and $\rank\cof\eps =3$.
Now,  let $g:\Scal^{3\times 3}\times \Scal^{3\times 3}\to \R$ be the zero function, and choose $h:\Scal^{3\times 3}\to \R$ such that $h(\eps) = 1$ for all $\eps \in \Scal^{3\times 3}$ with $\rank \eps = 2$, and $h(\eps)=0$ otherwise. Then, since $\rank\cof\eps \neq2$, $g(\eps, \cof\eps) = h(\cof\eps)$ for any $\eps \in \Scal^{3\times 3}$, but $h\neq 0 = g(0,\cdot)$. 

b) Let $g:\Scal^{3\times 3}\times \Scal^{3\times 3}\to \R$ be convex. If there is a continuous function $h:\Scal^{3\times 3}\times \R\to \R$ such that
\begin{align*}
g(\eps, \cof \eps) = h(\cof \eps,\det \eps)\quad \text{for all $\eps\in \Scal^{3\times 3}$},
\end{align*}
results analogous to Lemma~\ref{lem:identical3} $i)$ and $ii)$ cannot be expected. That is, $h$ may be non-constant in the last variable and $h(\cdot, 0) \neq g(0,\cdot)$.   Due to $\det (\cof \eps) = (\det \eps)^2$ for all $\eps \in \Scal^{3\times 3}$, considering the functions
\begin{align*}
g(\eps, \eta)= 0\quad\text{ and }\quad h(\eta, t) = t^2-\det \eta,
\quad \eps, \eta\in \Scal^{3\times 3}, t\in \R,
\end{align*}  
is an example. 
\end{remark}

The following statement is an immediate consequence of Theorem~\ref{theo:characterization3d} and Lemma~\ref{lem:identical3}. It allows us to decide in special situations, if a function is symmetric polyconvex or not, without having to find a function $g$ as required in Theorem~\ref{theo:characterization3d}.

\begin{corollary}\label{cor:3d}
$i)$ Let $f:\Scal^{3\times 3}\to \R$ be given by 
\begin{align*}
f(\eps) = h(\det \eps), \quad \eps\in \Scal^{3\times 3}. 
\end{align*}
with $h:\R \to \R$. 
Then $f$ is symmetric polyconvex if and only if $h$ is constant.

$ii)$ Let $f:\Scal^{3\times 3}\to \R$ be 
given by
\begin{align*}
f(\eps) = h(\cof \eps), \quad \eps\in \Scal^{3\times 3},
\end{align*}
with $h:\Scal^{3\times 3} \to \R$ continuous. Then $f$ is symmetric polyconvex,
if and only if $h$ is convex with $\partial h(\eta)\subset \Scal_-^{3\times 3}$ for all $\eta\in \Scal^{3\times 3}$.  
\end{corollary}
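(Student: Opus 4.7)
The plan is to derive both statements as direct consequences of the characterization of symmetric polyconvexity in Theorem~\ref{theo:characterization3d} combined with the uniqueness-type statements in Lemma~\ref{lem:identical3}. Since Theorem~\ref{theo:characterization3d} expresses symmetric polyconvexity in terms of a suitable convex representation in the cofactor variable, the main task is to use Lemma~\ref{lem:identical3} to pin down the admissible shape of $h$.

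For part $i)$, the sufficiency direction is trivial: if $h$ is constant, then so is $f$, and constant functions are obviously symmetric polyconvex. For the necessity, I would apply Theorem~\ref{theo:characterization3d} to obtain a convex $g:\Scal^{3\times 3}\times \Scal^{3\times 3}\to \R$ with $\partial_2 g(\eps,\eta)\subset \Scal^{3\times 3}_-$ satisfying $g(\eps,\cof \eps)=f(\eps)=h(\det\eps)$ for every $\eps\in \Scal^{3\times 3}$. Lemma~\ref{lem:identical3}\;$i)$ applied to this identity immediately forces $h$ to be constant.

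For part $ii)$, the sufficiency follows by defining $g:\Scal^{3\times 3}\times \Scal^{3\times 3}\to\R$ via $g(\eps,\eta)=h(\eta)$; this is convex since $h$ is convex, independent of the first variable, and by construction $\partial_2 g(\eps,\eta)=\partial h(\eta)\subset \Scal^{3\times 3}_-$, so Theorem~\ref{theo:characterization3d} yields symmetric polyconvexity of $f(\eps)=g(\eps,\cof\eps)=h(\cof\eps)$. For the necessity, Theorem~\ref{theo:characterization3d} again gives a convex $g$ with $\partial_2 g\subset \Scal^{3\times 3}_-$ such that $g(\eps,\cof \eps)=h(\cof\eps)$ for every $\eps$, and Lemma~\ref{lem:identical3}\;$ii)$ (which is applicable since $h$ is continuous) then identifies $h(\eta)=g(0,\eta)$ for every $\eta\in \Scal^{3\times 3}$. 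Consequently, $h$ inherits convexity from $g$, and because $\partial h(\eta)=\partial_2 g(0,\eta)\subset \Scal^{3\times 3}_-$, the required subdifferential condition on $h$ follows.

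I do not expect any genuine obstacle, as all nontrivial content has been absorbed into the previously established Theorem~\ref{theo:characterization3d} and Lemma~\ref{lem:identical3}; the only point to be careful about is the need for continuity of $h$ in the necessity direction of $ii)$, which is part of the hypothesis and exactly what allows invoking Lemma~\ref{lem:identical3}\;$ii)$.
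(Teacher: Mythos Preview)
Your proposal is correct and matches the paper's approach exactly: the paper states this corollary as an immediate consequence of Theorem~\ref{theo:characterization3d} and Lemma~\ref{lem:identical3}, and what you have written is precisely the natural unpacking of that claim.
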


In contrast to the 2d setting, Corollary~\ref{cor:3d} $i)$ shows that there exists no symmetric polyconvex function in 3d that is given as a non-constant function of the determinant only.\\ 
%
%
\subsection{Symmetric polyconvex quadratic forms}  \label{sec:spc3}
  Finally, we turn our attention to quadratic forms in 3d. As a consequence of  Theorem~\ref{theo:characterization3d}, we obtain a characterization of symmetric polyconvexity for this class of functions, which reminds of the characterization of polyconvex quadratic forms in \cite[p.~192]{Dac08}.

\begin{proposition}[Characterization of symmetric polyconvex quadratic forms in 3d]\label{theoremquad}
Let $f:\Scal^{3\times 3}\to \R$ be a quadratic form. Then the following statements are equivalent: 
\begin{itemize}
\item[$i)$] $f$ is symmetric polyconvex;  
\item[$ii)$] there exist a convex quadratic form $h:\Scal^{3\times 3}\to \R$ and a matrix $A\in \Scal^{3\times 3}_+$ such that 
\begin{equation}\label{quad}
f(\eps)  = h(\eps)-A:\cof \eps \quad \text{for all $\eps\in \Scal^{3\times 3}$};
\end{equation} 
\item[$iii)$]  there is $A\in \Scal^{3\times 3}_+$ such that
\begin{align*}
f(\eps) +  A:\cof \eps \geq 0\quad \text{for all $\eps\in \Scal^{3\times 3}$.} 
\end{align*}
\end{itemize}
\end{proposition}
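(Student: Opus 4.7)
The plan is to establish the three equivalences via the scheme $ii)\Leftrightarrow iii)$, $ii)\Rightarrow i)$, and $i)\Rightarrow iii)$, since the latter is the only direction that requires genuine work. For $ii)\Leftrightarrow iii)$, I would observe that $\eps\mapsto A:\cof\eps$ is itself a quadratic form, so that $f+A:\cof(\cdot)$ is a quadratic form on $\Scal^{3\times 3}$; such a form is convex precisely when it is non-negative. Hence $iii)$ is just the assertion that $h:=f+A:\cof(\cdot)$ is a convex quadratic form, and the decomposition in $ii)$ follows by solving for $f$. For $ii)\Rightarrow i)$ the idea is to read off a valid generator from Theorem~\ref{theo:characterization3d}: set $g(\eps,\eta)=h(\eps)-A:\eta$. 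Since $h$ is convex and $\eta\mapsto -A:\eta$ is linear, $g$ is convex, and one has $\partial_2 g(\eps,\eta)=\{-A\}\subset \Scal^{3\times 3}_-$ because $A\in \Scal^{3\times 3}_+$. As $f(\eps)=g(\eps,\cof\eps)$, Theorem~\ref{theo:characterization3d} immediately yields symmetric polyconvexity.

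The main step, and the only one I expect to be non-trivial, is $i)\Rightarrow iii)$. My plan is to combine Theorem~\ref{theo:characterization3d} with the quadratic homogeneity of $f$. By the theorem I get a convex function $g:\Scal^{3\times 3}\times \Scal^{3\times 3}\to \R$ with $\partial_2 g(\eps,\eta)\subset \Scal^{3\times 3}_-$ for every $\eps,\eta$ and $f(\eps)=g(\eps,\cof \eps)$. Since $g$ is a finite convex function on a finite-dimensional space, $\partial g(0,0)$ is non-empty, and I pick any $(B_1,B_2)\in \partial g(0,0)$. The inclusion~\eqref{inclusion} guarantees $B_2\in \partial_2 g(0,0)\subset \Scal^{3\times 3}_-$. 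Writing down the subgradient inequality and evaluating it along the curve $(t\eps,t^2\cof\eps)$, together with $f(t\eps)=t^2 f(\eps)$ and $\cof(t\eps)=t^2\cof\eps$, gives
$$t^2 f(\eps) = g(t\eps, t^2 \cof \eps) \geq g(0,0) + t\, B_1:\eps + t^2\, B_2:\cof\eps.$$
Dividing by $t^2$ and letting $t\to\infty$ discards the sublinear and constant contributions and produces $f(\eps)\geq B_2:\cof\eps$ for every $\eps\in \Scal^{3\times 3}$. The choice $A:=-B_2\in \Scal^{3\times 3}_+$ then delivers $iii)$.

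The essential new ingredient beyond Theorem~\ref{theo:characterization3d} is the scaling trick in the last step: the subgradient at the origin only provides an affine lower bound for $g$, and it is the quadratic homogeneity of $f$ together with the quadratic homogeneity of $\cof$ that turns this affine bound into the sharp quadratic inequality controlled by $\cof\eps$ with a positive semi-definite coefficient. No further machinery should be required; everything else is a direct combination of the $3$d characterization, the partial subgradient inclusion~\eqref{inclusion}, and the elementary equivalence between convexity and non-negativity for quadratic forms.
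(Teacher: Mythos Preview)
Your proposal is correct and follows essentially the same route as the paper: the same implication scheme, the same appeal to Theorem~\ref{theo:characterization3d} for $ii)\Rightarrow i)$, and for $i)\Rightarrow iii)$ the same idea of picking a subgradient $(B_1,B_2)\in\partial g(0,0)$ with $B_2\in\Scal^{3\times 3}_-$ via~\eqref{inclusion} and then using the homogeneity of $f$ to kill the linear term. The only cosmetic difference is that the paper eliminates the linear contribution by exploiting the evenness $f(\eps)=f(-\eps)$ (adding the subgradient inequalities at $\eps$ and $-\eps$, and using $g(0,0)=f(0)=0$), whereas you use the scaling $t\mapsto t\eps$ and pass to the limit $t\to\infty$; both are equivalent one-line arguments.
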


\begin{proof} 
Note that $ii)$ and $iii)$ are equivalent since quadratic forms are convex if and only if they are non-negative. To prove ``$ii)\Rightarrow i)$" suppose that $f$ is of the form~\eqref{quad}. Defining $g:\Scal^{3\time 3}\times \Scal^{3\time 3}\to\R$ by $g(\eps,\eta)=h(\eps)-A:\eta$ for $\eps, \eta\in \Scal^{3\times 3}$, the representation~\eqref{representation} is immediate. Moreover, $g$ is convex and smooth with 
\begin{align*}
\partial_2 g(\eps, \cof \eps) = \{-A\} \quad \text{for every $\eps\in \Scal^{3\times 3}$}.
\end{align*}
Since $-A$ is negative semi-definite by assumption, $f$ is symmetric polyconvex according to 
Theorem \ref{theo:characterization3d}.

To show that $i)$ implies $iii)$, let $f$ be a symmetric polyconvex quadratic form on $\Scal^{3\times 3}$, and let $g$ be an associated convex function resulting from Theorem \ref{theo:characterization3d}. 
As $g$ is convex, there exist $B, \tilde{B}\in \Scal^{3\times 3}$ such that $(\tilde{B}, B)\in \partial g(0,0)$. We note that owing to $\partial_2 g(0,0)\subset \Scal^{3\times 3}_-$, the matrix $B$ is negative semi-definite, cf.~\eqref{inclusion}.
Then, it follows for every $\eps\in \Scal^{3\times 3}$ that 
 \begin{align*}
 f(\eps)=g(\eps,\cof\eps) & \geq g(0,0)+\tilde{B}:\eps+B :\cof\eps,\\
 f(-\eps)=g(-\eps,\cof\eps) & \geq g(0,0)-\tilde{B} :\eps +B:\cof\eps.
 \end{align*} 
Since $f$ is a quadratic form, $f(\eps)=f(-\eps)$ for all $\eps\in \Scal^{3\times 3}$ and $g(0,0)=f(0)=0.$ By summing up the two inequalities and setting $A=-B$, we conclude that $f(\eps)+A:\cof\eps\geq 0$ for all $\eps\in \Scal^{3\times 3}$, as asserted.    
\end{proof}

Next we present an example of a quadratic form which is symmetric rank-one convex but not symmetric polyconvex. As already mentioned in the introduction, this is motivated by a corresponding result in the classical setting by Serre \cite{Ser83}.

\begin{theorem}\label{theo:counterexample}
There exists $\eta>0$ such that the quadratic form $f:\Scal^{3\times 3}\to \R$ given by
$$
f(\eps)= (\eps_{13}-\eps_{23})^2+(\eps_{12}-\eps_{13})^2+(\eps_{12}-\eps_{23})^2 + \eps_{11}^2+\eps_{22}^2+\eps_{33}^2-\eta|\eps|^2
$$
is symmetric rank-one convex, but not symmetric polyconvex.  
\end{theorem}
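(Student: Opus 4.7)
The claim decomposes into symmetric rank-one convexity of $f$ and failure of symmetric polyconvexity; my plan handles these in turn.

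For rank-one convexity, write $f(\eps) = g(\eps) - \eta |\eps|^2$ with
\begin{equation*}
g(\eps) := (\eps_{13}-\eps_{23})^2+(\eps_{12}-\eps_{13})^2+(\eps_{12}-\eps_{23})^2 + \eps_{11}^2+\eps_{22}^2+\eps_{33}^2,
\end{equation*}
a non-negative quadratic form independent of $\eta$. By~\eqref{rankform}, symmetric rank-one convexity amounts to $g(a\odot b) \ge \eta |a\odot b|^2$ for all $a,b\in\R^3$; by homogeneity and compactness of the compatible unit sphere, it suffices to show $g(a\odot b)=0 \Rightarrow a\odot b=0$. The equation $g(a\odot b)=0$ forces $a_ib_i=0$ for every $i$ together with the three off-diagonal entries $\tfrac12(a_ib_j+a_jb_i)$ being equal, and a short case analysis on the (necessarily disjoint) supports $I=\{i:a_i\neq 0\}$ and $J=\{j:b_j\neq 0\}$, dividing into $|I|+|J|\le 2$ versus $\ge 3$, rules out every nontrivial configuration. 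Hence $\eta^* := \inf\{g(\eps_0)/|\eps_0|^2 : \eps_0 = a\odot b \neq 0\} > 0$, and I fix $\eta = \eta^*$ so that $f\ge 0$ on compatible matrices with equality attained on a nonempty set of minimizers.

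For non-polyconvexity, I argue by contradiction via Proposition~\ref{theoremquad}: assume $f$ is symmetric polyconvex, so there exists $A\in\Scal^{3\times 3}_+$ with $f(\eps)+A:\cof\eps\ge 0$ for every $\eps \in \Scal^{3\times 3}$. At any compatible minimizer $\eps_0 = a_0\odot b_0$ one has $f(\eps_0)=0$ and, by~\eqref{cof_ab}, $\cof\eps_0 = -\tfrac14(a_0\times b_0)\otimes(a_0\times b_0)$, so the polyconvexity inequality collapses to
\begin{equation*}
0 \ge -A:\cof\eps_0 = \tfrac14\,A(a_0\times b_0)\cdot(a_0\times b_0) \ge 0.
\end{equation*}
Positive semi-definiteness of $A$ then yields $A(a_0\times b_0)=0$, and letting $\eps_0$ range over the set of compatible minimizers gives $\spann\{a_0\times b_0\}\subset\ker A$.

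The main technical obstacle is the structural study of the minimizer set, which is the content of Lemma~\ref{lem:properties_eps0}. Using the $S_3$-permutation symmetry of $g$ together with an explicit parametrization of the compatible cone (symmetric matrices of rank $\le 1$ together with indefinite rank-$2$ ones), I expect one can extract enough minimizers for their associated vectors $a_0\times b_0$ to span $\R^3$, forcing $A=0$ and collapsing the polyconvexity inequality to $f\ge 0$ pointwise. This is immediately refuted by the non-compatible, full-rank test matrix
\begin{equation*}
M = \begin{pmatrix} 0 & 1 & 1 \\ 1 & 0 & 1 \\ 1 & 1 & 0 \end{pmatrix},
\end{equation*}
for which all diagonal entries vanish and all off-diagonal entries coincide, so $g(M)=0$ and $f(M) = -\eta\,|M|^2 = -6\eta < 0$. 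Should the minimizer analysis only yield a proper subspace $V=\spann\{a_0\times b_0\}\subsetneq \R^3$, I would exploit the remaining freedom (namely $A$ being supported on $V^\perp$) by evaluating $f+A:\cof$ on a carefully chosen combination of $M$ with further perturbations adapted to the eigenstructure of the cofactor map, the explicit spectrum $\{1,-2,-2\}$ of $\cof M$ (with principal eigenvector along $(1,1,1)$) providing the necessary control.
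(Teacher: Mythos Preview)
Your rank-one convexity argument is correct and essentially identical to the paper's: define $\eta$ as the minimum of $g$ over the compatible unit sphere, check positivity via the observation that $g(\eps)=0$ forces $\eps$ to be the full-rank matrix $M$ (up to scaling), which is not compatible.

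The non-polyconvexity argument, however, has a genuine gap. Your main plan is to show that the vectors $a_0\times b_0$, as $\eps_0=a_0\odot b_0$ ranges over compatible minimizers, span $\R^3$, thereby forcing $A=0$ and contradicting $f(M)<0$. You state this only as an expectation, and there are concrete obstacles:
\begin{itemize}
\item If a minimizer has rank one (i.e.\ $a_0\parallel b_0$), then $a_0\times b_0=0$ and the kernel argument is vacuous. This is not a hypothetical concern: the upper bound $\eta\le\tfrac13$ that you implicitly invoke is attained by the rank-one matrix $\tfrac13(1,1,1)\otimes(1,1,1)$, so rank-one minimizers are a live possibility that must be handled.
\item Even for a rank-two minimizer, the $S_3$-orbit of $v=a_0\times b_0$ spans all of $\R^3$ only when the components of $v$ are neither all equal nor summing to zero; otherwise the orbit spans only $\R(1,1,1)$ or $(1,1,1)^\perp$. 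You give no argument excluding these degenerate configurations.
\item Your fallback (``perturbations adapted to the eigenstructure of $\cof M$'') is not worked out and does not yet constitute a proof.
\end{itemize}

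The paper proceeds quite differently and avoids the issue of determining the minimizer set altogether. It fixes a \emph{single} minimizer $\bar\eps=a\odot b$ and perturbs each entry, $\bar\eps_\delta^{ij}=\bar\eps+\delta\, e_i\odot e_j$. Since $\bar\eps$ is a zero of the nonnegative quadratic form $f+A:\cof$, the coefficient of $\delta$ in $f(\bar\eps_\delta^{ij})+A:\cof\bar\eps_\delta^{ij}$ must vanish for every $(i,j)$; this yields six linear equations in the six unknowns $A_{11},\dots,A_{23}$. The paper then shows by explicit Gaussian elimination that this system is \emph{inconsistent}, using Lemma~\ref{lem:properties_eps0} --- whose actual content is that the components of $a$ and $b$ cannot vanish in certain patterns (at most one zero each, in different positions), not the span statement you attribute to it --- to guarantee the needed pivots are nonzero. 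Crucially, the argument is carried out separately and uniformly in the rank-one and rank-two cases for $\bar\eps$, so it never depends on identifying which case actually occurs.
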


\begin{proof}    Consider the quadratic form 
\begin{align}\label{f0}
f_0(\eps)= (\eps_{13}-\eps_{23})^2+(\eps_{12}-\eps_{13})^2+(\eps_{12}-\eps_{23})^2+ \eps_{11}^{2}+\eps_{22}^2+\eps_{33}^2, \quad \eps\in \Scal^{3\times 3},
\end{align}
and let \begin{equation}\label{eta}
\eta=\min_{\eps \in \Mcal} f_0(\eps)
\end{equation}
with 
\begin{align}\label{calM}
\Mcal=\left\{\eps\in \Scal^{3\times 3}: |\eps|=1, \eps = a\odot b \text{ with $a, b\in \R^3$}\right\}.
\end{align}
Note that the minimum in~\eqref{eta} indeed exists, and that $\eta>0$. To see the latter, assume to the contrary that $\eta =0$. Then, if $\bar{\eps}$ is a minimizer of $f_0$ in $\Mcal$, it holds that 
$f_0(\bar\eps) =0$. Along with $|\bar{\eps}|=1$ we conclude that necessarily, 
\begin{align*}
\bar\eps= \frac{1}{\sqrt{6}} \begin{pmatrix} 0 & 1 & 1 \\ 1 & 0 & 1\\ 1 & 1 & 0\end{pmatrix}.
\end{align*}
However, since this $\bar\eps$ has full rank, $\bar\eps\notin \Mcal$, which is the desired contradiction. Furthermore, observe that $\eta < 1$ as shown in \eqref{minimizer} below.

The remainder of the proof is subdivided into the natural two steps.\\

\textit{Step~1: $f$ is symmetric rank-one convex.} 
Due to~\eqref{eta}, one has that $f(a\odot b) = f_0(a\odot b) - \eta |a\odot b|^2 \geq 0$ for all $a, b\in \R^3$. Hence, $f$ is symmetric rank-one convex by \eqref{rankform}.  \\

\textit{Step~2: $f$ is not symmetric polyconvex.} Let $A=(A_{ij})_{ij}\in \Scal^{3\times 3}_+$. In view of Proposition~\ref{theoremquad} $iii)$, it is enough to find one $\eps_A\in \Scal^{3\times 3}$ such that $f(\eps_A)+A:\cof {\eps_A}<0$. 

Let $\bar\eps=a\odot b$ with $a, b\in \R^3$ such that $|\bar\eps|=1$ be a minimizer of $f_0$ in $\Mcal$. 
In view of~$f(\bar\eps)=0$, \eqref{cof_ab}, and the positive semi-definiteness of $A$, we infer that
\begin{align*}
 f(\bar\eps) +A: \cof \bar\eps = A:\cof \bar\eps= - \frac{1}{4} A(a\times b) \cdot (a\times b) \leq 0.
\end{align*} 
Hence, if $A:\cof \bar\eps<0$, setting $\eps_A=\bar\eps$ finishes the proof. 

If $A:\cof \bar\eps=0$, we consider a perturbation of $\bar\eps$ of the form 
 \begin{align*}
 \bar\eps_\delta^{11} = \bar\eps + \delta e_1\otimes e_1,\qquad  \delta\in \R,
 \end{align*} 
 for which we have that
\begin{align*}
f(\bar\eps_{\delta}^{11})+A:\cof \bar\eps_{\delta}^{11}=(1-\eta)\delta^2+(\bar\eps_{33}A_{22}+\bar\eps_{22}A_{33}-2\bar\eps_{23}A_{23}+2(1-\eta)\bar\eps_{11})\delta.
\end{align*}
One can choose $\delta\in \R$ such that $f(\bar\eps_{\delta}^{11})+A:\cof \bar\eps_{\delta}^{11}<0$ and thus, conclude the proof, unless $\bar\eps_{33}A_{22}+\bar\eps_{22}A_{33}-2\bar\eps_{23}A_{23}+2(1-\eta)\bar\eps_{11}=0$. If the latter equation is satisfied, we perturb another entry of $\bar\eps$, say $\bar\eps_{12}$, and consider 
\begin{align*}
\bar\eps_{\delta}^{12}=\bar\eps+\delta e_1\otimes e_2, \qquad \delta\in \R.
\end{align*} 
It follows then that 
\begin{align*}
f(\bar\eps_{\delta}^{12})+A:\cof \bar\eps_{\delta}^{12} &  = \left(2(1-\eta)-A_{33}\right)\delta^2 \\ & \ \ \ +(-2\bar\eps_{12}A_{33}-2\bar\eps_{33}A_{12}+2\bar\eps_{23}A_{13}+2\bar\eps_{13}A_{23}-2\bar\eps_{13}-2\bar\eps_{23}+4\bar\eps_{12}(1-\eta))\delta.
\end{align*}
 
Hence, a choice of $\delta$ such that $f(\bar\eps_{\delta}^{12})+A:\cof \bar\eps_{\delta}^{12}<0$ is possible, except when the factor in front of $\delta$ vanishes. 
Performing this perturbation procedure for all relevant components yields a proof of Step~2, provided the entries $A_{ij}$ of $A$ do not satisfy the following six equations: 
 \begin{align*}
\begin{array}{llll}
\bar\eps_{33}A_{22}+\bar\eps_{22}A_{33}-2\bar\eps_{23}A_{23}&=-2(1-\eta)\bar\eps_{11}\\
\bar\eps_{33}A_{11}+\bar\eps_{11}A_{33}-2\bar\eps_{13}A_{13}&=-2(1-\eta)\bar\eps_{22}\\
\bar\eps_{22}A_{11}+\bar\eps_{11}A_{22}-2\bar\eps_{12}A_{12}&=-2(1-\eta)\bar\eps_{33}\\
-\bar\eps_{12}A_{33}-\bar\eps_{33}A_{12}+\bar\eps_{23}A_{13}+\bar\eps_{13}A_{23}&=\bar\eps_{13}+\bar\eps_{23}-2(1-\eta)\bar\eps_{12}\\
-\bar\eps_{13}A_{22}+\bar\eps_{23}A_{12}-\bar\eps_{22}A_{13}+\bar\eps_{12}A_{23}&=\bar\eps_{12}+\bar\eps_{23}-2(1-\eta)\bar\eps_{13}\\
-\bar\eps_{23}A_{11}+\bar\eps_{13}A_{12}+\bar\eps_{12}A_{13}-\bar\eps_{11}A_{23}&=\bar\eps_{12}+\bar\eps_{13}-2(1-\eta)\bar\eps_{23}.
\end{array}
\end{align*}

Indeed, taking into account the special structure of $\bar\eps$ as a linearized strain $a\odot b$, 
we will rule out the latter case by proving that the linear system 
\begin{align}\label{linearsystem}
Lx=c
\end{align} 
with unknown $x\in \R^6$, 
\begin{align*}
L=\begin{pmatrix}
0&a_3b_3&a_2b_2&0&0&-a_2b_3-a_3b_2\\
a_3b_3&0&a_1b_1&0&-a_1b_3-a_3b_1&0\\
a_2b_2&a_1b_1&0&-a_1b_2-a_2b_1&0&0\\
0&0&-a_1b_2-a_2b_1&-2a_3b_3&a_2b_3+a_3b_2&a_1b_3+a_3b_1\\
0&-a_1b_3-a_3b_1&0&a_2b_3+a_3b_2&-2a_2b_2&a_1b_2+a_2b_1\\
-a_2b_3-a_3b_2&0&0&a_1b_3+a_3b_1&a_1b_2+a_2b_1&-2a_1b_1
\end{pmatrix},
\end{align*} 
and 
$$
c=\begin{pmatrix}
-2(1-\eta)a_1b_1\\
-2(1-\eta)a_2b_2\\
-2(1-\eta)a_3b_3\\
a_1b_3+a_3b_1+a_2b_3+a_3b_2-2 (1-\eta)(a_1b_2+a_2b_1)\\
a_1b_2+a_2b_1+a_2b_3+a_3b_2-2 (1-\eta)(a_1b_3+a_3b_1)\\
a_1b_2+a_2b_1+a_1b_3+a_3b_1-2(1-\eta)(a_2b_3+a_3b_2)
\end{pmatrix}
$$
does not admit a solution. 
To show that~\eqref{linearsystem} is unsolvable, we split the argument in two separate cases, based on~\cite[Lemma~4.1]{Koh91}.

The first case assumes that the matrix $a\odot b$ is of rank one, which means that the vectors $a$ and $b$ are parallel. According to Lemma~\ref{lem:properties_eps0} $i)$ below, we then know that all components of $a$ and $b$  are different from $0$. We now apply the Gaussian algorithm to show that solving~\eqref{linearsystem} is equivalent to solving the reduced system
$$
\begin{pmatrix}
1&0&0&-\frac{a_1}{a_2}&-\frac{a_1}{a_3}&\frac{a_1^2}{a_2 a_3}\\
0&1&0&-\frac{a_2}{a_1}&\frac{a_2^2}{a_1 a_3}&-\frac{a_2}{a_3}\\
0&0&1&\frac{a_3^2}{a_1
a_2}&-\frac{a_3}{a_1}&-\frac{a_3}{a_2}\\
0&0&0&0&0&0\\
0&0&0&0&0&0\\
0&0&0&0&0&0
\end{pmatrix}
\begin{pmatrix}
x_1\\x_2\\x_3\\x_4\\x_5\\x_6 
\end{pmatrix}
=
\begin{pmatrix}
0\\0\\0\\0\\1\\0 
\end{pmatrix}, \qquad x\in\R^6,
$$
which is clearly not solvable.

In the second case, $a\odot b$ has rank two.   Thus $\cof (a\odot b) \neq 0$. Hence, by \eqref{cof_ab}, we can assume without loss of generality that $a_2b_1-a_1b_2\neq 0$.  If $a_3\neq0$ and $b_3\neq0$, then $a_3 (a_2 b_1-a_1 b_2) b_3\neq 0$. If $a_3=0$, then, by Lemma~\ref{lem:properties_eps0}, the coefficients $a_1,a_2$ and $b_3$ cannot vanish.  If in addition $b_1\neq0$, then $a_1(a_2b_3-a_3b_2)b_1 =a_1a_2b_1b_3\neq0$. If $b_1=0$, then $b_2\neq0$  by Lemma~\ref{lem:properties_eps0}, and we conclude that $a_2(a_1b_3-a_3b_1)b_2= a_1a_2b_2b_3\neq0$.  Hence, at least one of the expressions $a_3 (a_2 b_1-a_1 b_2) b_3$, $a_1(a_2 b_3-a_3 b_2)b_1$ and $a_2(a_1b_3-a_3b_1)b_2$ is non-zero.

Let us assume that $a_3(a_2b_1-a_1b_2)b_3\neq 0$, then the Gaussian reduced form of~\eqref{linearsystem} reads
$$
\begin{pmatrix}
1&0&0&0&\frac{-a_2 a_3 b_1^2+a_1^2 b_2 b_3}{a_3 (a_2 b_1-a_1 b_2) b_3}&-\frac{-a_1 a_3 b_1^2+a_1^2 b_1 b_3}{a_3 (a_2 b_1-a_1 b_2) b_3}\\
0&1&0&0&-\frac{a_2 a_3 b_2^2-a_2^2
b_2 b_3}{a_3 (a_2 b_1-a_1 b_2) b_3}&\frac{-a_1 a_3 b_2^2+a_2^2 b_1 b_3}{a_3 (-a_2 b_1+a_1 b_2) b_3}\\
0&0&1&0&\frac{a_3 b_2-a_2 b_3}{a_2 b_1-a_1 b_2}&\frac{a_1 b_3 -a_3b_1}{ a_2 b_1 -a_1b_2}\\
0&0&0&1&-\frac{a_2 a_3 b_1 b_2-a_1 a_2 b_2 b_3}{a_3 (a_2 b_1-a_1 b_2) b_3}&-\frac{-a_1 a_3 b_1 b_2+a_1 a_2 b_1 b_3}{a_3 (a_2 b_1-a_1 b_2) b_3}\\
0&0&0&0&0&0\\
0&0&0&0&0&0
\end{pmatrix}
\begin{pmatrix}
x_1\\x_2\\x_3\\x_4\\x_5\\x_6 
\end{pmatrix}
=
\begin{pmatrix}
0\\0\\0\\0\\1\\0 
\end{pmatrix}, \qquad x\in\R^6,$$ 
which has no solution. The other situations can be treated in the same way after a suitable permutation of lines in~\eqref{linearsystem}. Finally, this shows that~\eqref{linearsystem} does not admit a solution, and the proof is completed. 
\end{proof}

The proof of Theorem~\ref{theo:counterexample} makes use of the following auxiliary result on the structure of minimizers   of $f_0$ in $\Mcal$, cf.~\eqref{eta}.

\begin{lemma}\label{lem:properties_eps0}
Suppose that $\bar\eps$ is a minimizer of $f_0$ in $\Mcal$, where $f_0$ is defined in \eqref{f0} and $\Mcal$ in \eqref{calM}. Then $\bar\eps=a\odot b$, where $a, b\in \R^3$ each has at most one zero entry, but not in the same component. In formulas, this means that 
\begin{itemize}
\item[$i)$]  $(a_i,b_i)\neq (0,0)$ for all $i\in\{1,2,3\}$, and
\item[$ii)$]  $(a_i,a_j)\neq (0,0)$ and $(b_i,b_j)\neq (0,0)$ for all $ i, j\in\{1,2,3\}$, $i\neq j$. 
\end{itemize}
\end{lemma}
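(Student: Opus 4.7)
My approach rests on two algebraic preliminaries. First, expanding the squared differences in \eqref{f0} directly gives the more symmetric reformulation
\begin{equation*}
f_0(\eps) = |\eps|^2 - 2\bigl(\eps_{12}\eps_{13} + \eps_{12}\eps_{23} + \eps_{13}\eps_{23}\bigr), \qquad \eps \in \Scal^{3\times 3},
\end{equation*}
which makes it transparent that $f_0$ is invariant under every simultaneous permutation of rows and columns by an element of $S_3$. Second, I would establish the strict upper bound $\eta < 1/2$ by exhibiting an explicit element of $\Mcal$: with $a = (1,1,0)^T$ and $b = (0,1,1)^T$ a direct calculation yields $|a\odot b|^2 = 5/2$ and $f_0(a\odot b) = 1$, so that $(a\odot b)/\sqrt{5/2} \in \Mcal$ gives $\eta \le 2/5$.

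Given these, part (i) follows at once. If $(a_i, b_i) = (0,0)$ for some $i$, then the $i$-th row and column of $\bar\eps$ vanish. Substituting these zeros into \eqref{f0}, the three squared differences collapse in each of the three cases $i \in \{1,2,3\}$ to $2\bar\eps_{jk}^2$ with $\{j,k\} = \{1,2,3\}\setminus\{i\}$, so that $f_0(\bar\eps) = |\bar\eps|^2 = 1$. This contradicts the minimality of $\bar\eps$ because $\eta < 1$.

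For (ii), the $S_3$-invariance just noted allows me to reduce the case $(a_i, a_j) = (0,0)$ to $(a_1, a_2) = (0,0)$; the analogous statements for $b$ are then handled by the symmetry $a \odot b = b \odot a$. In the reduced case one has $\bar\eps_{11} = \bar\eps_{22} = \bar\eps_{12} = 0$, so the identity above collapses to $f_0(\bar\eps) = |\bar\eps|^2 - 2\bar\eps_{13}\bar\eps_{23}$. Combining the elementary estimate $2xy \le x^2 + y^2$ with the normalization $2\bar\eps_{13}^2 + 2\bar\eps_{23}^2 \le |\bar\eps|^2 = 1$ yields $2\bar\eps_{13}\bar\eps_{23} \le 1/2$, hence $f_0(\bar\eps) \ge 1/2 > \eta$, again a contradiction.

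The only non-routine step is locating a rank-two element of $\Mcal$ on which $f_0$ takes a value strictly below $1/2$; once the symmetric form of $f_0$ above is in hand, such an example is easy to spot by inspection, and the rest of the argument amounts to a straightforward symmetry reduction followed by an elementary AM--GM inequality.
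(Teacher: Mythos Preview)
Your proof is correct and follows essentially the same strategy as the paper: bound $\eta$ from above by evaluating $f_0$ on a test element of $\Mcal$, then show that each forbidden configuration forces $f_0(\bar\eps)\geq 1$ (case $i)$) or $f_0(\bar\eps)\geq \tfrac12$ (case $ii)$), contradicting minimality.

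The differences are cosmetic but worth noting. The paper uses the rank-one test matrix $\tfrac13(1,1,1)\odot(1,1,1)$ to get $\eta\leq\tfrac13$, whereas you use a rank-two test to get the weaker bound $\eta\leq\tfrac25$; both are below $\tfrac12$, so both suffice. Your symmetric reformulation $f_0(\eps)=|\eps|^2-2(\eps_{12}\eps_{13}+\eps_{12}\eps_{23}+\eps_{13}\eps_{23})$ is a nice addition: it makes the $S_3$-invariance explicit and streamlines the case $ii)$ computation to a single AM--GM step, whereas the paper parametrizes $\bar\eps$ by $a_3$ and $b$ and estimates directly. Neither argument is materially shorter, but yours makes the ``analogously'' reductions fully transparent.
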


\begin{proof}
It suffices for $i)$ to show that $(a_1, b_1)\neq (0, 0)$ and for $ii)$ that $(a_1,a_2)\neq(0,0)$. The other cases follow analogously.
We start by observing that 
\begin{align}\label{minimizer}
\eta =f_0(\bar\eps) \leq f_0 \left( \frac{1}{3} \begin{pmatrix} 1\\1\\1 \end{pmatrix} \odot  \begin{pmatrix} 1\\1\\1 \end{pmatrix}\right) = \frac{1}{3}.
\end{align}

For the proof of $i)$, suppose that $a_1=b_1=0$. Then
$$
\bar\eps=\begin{pmatrix}
0&0&0\\
0&a_2b_2&\frac{1}{2}(a_2b_3+a_3b_2)\\
0&\frac{1}{2} (a_2b_3+a_3b_2) & a_3b_3
\end{pmatrix},
$$
and the fact that $|\bar\eps|=1$ allows us to conclude that 
\begin{align*}
f_0(\bar\eps)=(a_2b_2)^2+\tfrac{1}{2}(a_2b_3+a_3b_2)^2+(a_3b_3)^2= |\bar\eps|^2=1.
\end{align*} 
This is a contradiction, since the minimal value of $f_0$ is strictly smaller than $1$ by~\eqref{minimizer}. Hence $i)$ follows. 

Now assume that $a_1=a_2=0$. Then 
$$
\bar\eps=a_3\begin{pmatrix}
0&0&b_1\\
0&0&b_2\\
b_1&b_2 &b_3
\end{pmatrix}
$$
with $a_3\neq 0$ and $b\neq 0$. In view of the normalization condition, we find that $a_3^2 = \frac{1}{2b_1^2+2b_2^2+b_3^2}$. 
Hence, 
\begin{align*}
f_0(\bar\eps) = 1-2a_3^2b_1b_2 \geq 1-\frac{2b_1b_2}{2b_1^2 +2b_2^2} \geq \tfrac{1}{2},
\end{align*}
which in view of~\eqref{minimizer} is in contradiction to the minimizing property of $\bar\eps$. This yields $ii)$. 
\end{proof}

Finally, we observe that for those quadratic forms that are representable as linear combinations of $2\times 2$ minors, the notions of symmetric polyconvexity and symmetric rank-one convexity are in fact identical. 
\begin{corollary}\label{cor:B}
Let $f:\Scal^{3\times 3}\to \R$ be the quadratic form $$f(\eps)=-A:\cof \eps \quad \text{ for }\eps\in \Scal^{3\times 3}$$ with a given matrix $A\in \Scal^{3\times 3}$. Then the following three statements are equivalent: 
\begin{itemize}
\item[$i)$] $f$ is symmetric polyconvex;
\item[$ii)$] $f$ is symmetric rank-one convex;
\item[$iii)$] $A$ is positive semi-definite.
\end{itemize}
\end{corollary}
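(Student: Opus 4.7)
The proof is a direct application of results already proved in the paper. The implication $(i) \Rightarrow (ii)$ is immediate from the general hierarchy \eqref{implications} of symmetric semi-convexity notions, so no work is needed there. For $(iii) \Rightarrow (i)$, I would apply Proposition~\ref{theoremquad}: with $h = 0$ (which is trivially a convex quadratic form on $\Scal^{3\times 3}$), the representation $f(\eps) = h(\eps) - A : \cof\eps$ in \eqref{quad} holds, and by assumption $A \in \Scal^{3\times 3}_+$, so $f$ is symmetric polyconvex.

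The only part requiring a short computation is $(ii) \Rightarrow (iii)$. Since $f$ is a quadratic form, symmetric rank-one convexity is equivalent, by \eqref{rankform}, to $f(a \odot b) \geq 0$ for all $a, b \in \R^3$. Using the identity \eqref{cof_ab}, namely $\cof(a \odot b) = -\tfrac{1}{4} (a \times b) \otimes (a \times b)$, I get
\begin{equation*}
f(a \odot b) = -A : \cof(a \odot b) = \tfrac{1}{4}\, A : \bigl((a\times b) \otimes (a \times b)\bigr) = \tfrac{1}{4}\, A(a \times b) \cdot (a \times b) \geq 0.
\end{equation*}
Since every $x \in \R^3$ can be written as $a \times b$ for suitable $a, b \in \R^3$ (given $x$, choose any two vectors $a, b$ orthogonal to each other and to the plane perpendicular to $x$ of appropriate length, or equivalently observe that the image of the cross product map $\R^3 \times \R^3 \to \R^3$ is all of $\R^3$), it follows that $A x \cdot x \geq 0$ for every $x \in \R^3$. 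Thus $A \in \Scal^{3\times 3}_+$, which is $(iii)$.

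No step here is a serious obstacle: the result is essentially a corollary of Proposition~\ref{theoremquad} (for $(iii) \Rightarrow (i)$) combined with the elementary computation of $\cof(a\odot b)$ (for $(ii) \Rightarrow (iii)$). The only minor subtlety is observing the surjectivity of the cross product, which follows because for any nonzero $x$ one can take $a$ and $b$ to be any two vectors in the orthogonal plane with $a \times b = x$. This can also be viewed as a direct specialization of Lemma~\ref{convexquadra}, where an analogous equivalence was shown for $q_A(F) = A : \cof F^a$; indeed, restricted to symmetric rank-one directions, the present situation reduces to the same algebraic computation.
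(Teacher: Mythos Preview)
Your proof is correct. The implications $(i)\Rightarrow(ii)$ and $(iii)\Rightarrow(i)$ are handled exactly as in the paper, via \eqref{implications} and Proposition~\ref{theoremquad} respectively.

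For $(ii)\Rightarrow(iii)$ you take a slightly more direct route than the paper. The paper argues indirectly: it writes $\tilde f(F)=-A:\cof F+q_A(F)$ using~\eqref{cof_3d}, observes that $F\mapsto A:\cof F$ is rank-one affine, deduces that $q_A$ is rank-one convex, and then invokes Lemma~\ref{convexquadra} to conclude that $A$ is positive semi-definite. You instead plug $a\odot b$ straight into $f$ and use~\eqref{cof_ab} together with~\eqref{rankform}, which is the same computation that underlies the proof of Lemma~\ref{convexquadra} but applied directly to $f$ rather than to the auxiliary form $q_A$. Your approach is marginally more economical; the paper's has the small advantage of packaging the algebraic step once in Lemma~\ref{convexquadra} and reusing it. As you yourself note at the end, the two arguments are essentially the same at the level of the underlying identity.
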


\begin{proof} By \eqref{implications}, $i)$ implies $ii)$. By Proposition~\ref{theoremquad}, we know that $iii)$ implies $i)$. 
In order to prove that $ii)$ implies $iii)$, assume that $f$ is symmetric rank-one convex. According to~\eqref{cof_3d} and~\eqref{QA},
$$
\tilde{f}(F)=f(F^s)=-A:\cof F^s= -A:\cof F + A:\cof F^a = -A:\cof F + q_A(F)
$$
for $F\in \R^{3\times 3}$. 
Since $\tilde{f}$ is rank-one convex and $F\mapsto A:\cof F$ is polyconvex, and therefore in particular rank-one convex, we infer that the quadratic form $q_A$ is also rank-one convex. Due to Lemma~\ref{convexquadra}, $A$ is then positive semi-definite, which is $iii)$. 
\end{proof}

In particular, it follows from Corollary~\ref{cor:B} that any symmetric rank-one convex quadratic form given in terms of a linear combination of off-diagonal cofactor entries can only be trivial, since there is no positive semi-definite symmetric matrix with vanishing diagonal entries. 

\subsection{Symmetric polyaffine functions} \label{subsec:polyaffine3}
In the classical setting, it is well-known that for real-valued functions defined on $\R^{3\times 3}$ the properties of being polyaffine, quasiaffine (or Null-Lagrangian) and rank-one affine are equivalent, see~e.g.~\cite[Theorem~5.20]{Dac08}. Consequently, by Definition~\ref{def:sqc} the corresponding statement is true in the symmetric setting. 

Next we show that a function $f$ in 3d is symmetric polyaffine (or symmetric quasiaffine or symmetric rank-one affine) if and only if $f$ is affine.   The proof follows from the characterization of symmetric polyconvex functions from Theorem~\ref{theo:characterization3d} in conjunction with Lemma~\ref{lem:identical3}.

\begin{proposition}[Characterization of symmetric polyaffine functions in \boldmath{$3$}d]\label{cor:polyaffine}
Let $f: \Scal^{3\times 3} \to \R$. Then $f$ is symmetric polyaffine if and only if it is affine, i.e.~there are   $B\in \Scal^{3\times 3}$ and $b\in \R^3$ such that 
\begin{align}\label{polyaff}
f(\eps) = B:\eps +b, \quad \eps\in \Scal^{3\times 3}. \
\end{align}
\end{proposition}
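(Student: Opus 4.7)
The sufficiency is immediate: if $f$ is affine as in~\eqref{polyaff}, then both $f$ and $-f$ are convex, and by~\eqref{implications} both are symmetric polyconvex, so $f$ is symmetric polyaffine.

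For necessity, I would invoke the characterization Theorem~\ref{theo:characterization3d} twice, applied to $f$ and to $-f$. This produces two convex functions $g_+, g_-:\Scal^{3\times 3}\times\Scal^{3\times 3}\to\R$, both satisfying the partial subdifferential condition $\partial_2 g_\pm(\eps,\eta)\subset\Scal^{3\times 3}_-$ for every $(\eps,\eta)$, and such that
\begin{align*}
f(\eps)=g_+(\eps,\cof\eps)\qquad\text{and}\qquad -f(\eps)=g_-(\eps,\cof\eps)\quad\text{for all }\eps\in\Scal^{3\times 3}.
\end{align*}

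The central step, and the one requiring the characterization, is to argue that $G:=g_++g_-$ vanishes identically. By construction $G$ is convex and $G(\eps,\cof\eps)=0$ for every $\eps\in\Scal^{3\times 3}$. Applying Lemma~\ref{lem:identical3}\,$ii)$ with the continuous function $h\equiv 0$ yields first that $G$ is independent of its first argument (this is the $\eqref{convexhull_3d}$-based argument recalled at the start of the lemma's proof) and then that $G(0,\cdot)=h\equiv 0$, so $G\equiv 0$. Consequently $g_-=-g_+$, which means $g_+$ is simultaneously convex and concave, hence affine. I would therefore write
\begin{align*}
g_+(\eps,\eta)=B:\eps+C:\eta+c,\qquad \eps,\eta\in\Scal^{3\times 3},
\end{align*}
for suitable $B,C\in\Scal^{3\times 3}$ and $c\in\R$.

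It remains to eliminate the cofactor term. Since $g_+$ is affine, $\partial_2 g_+\equiv\{C\}$, and the condition from Theorem~\ref{theo:characterization3d} forces $C\in\Scal^{3\times 3}_-$; applying the same condition to the affine function $g_-=-g_+$ yields $-C\in\Scal^{3\times 3}_-$, so $C$ is both positive and negative semi-definite and therefore $C=0$. This gives $f(\eps)=B:\eps+c$ as claimed. The main obstacle I anticipate is step three, i.e.\ recognizing that the rigidity statement $G\equiv 0$ for a convex $G$ vanishing on the graph of $\cof$ is exactly in the scope of Lemma~\ref{lem:identical3}; once this is seen, the rest reduces to exploiting affineness together with the partial-subdifferential sign condition.
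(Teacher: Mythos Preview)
Your proof is correct and follows essentially the same route as the paper: apply Theorem~\ref{theo:characterization3d} to both $f$ and $-f$, use Lemma~\ref{lem:identical3}\,$ii)$ with $h\equiv 0$ to conclude $g_-=-g_+$, and then exploit the sign condition on $\partial_2 g_\pm$ to eliminate the cofactor dependence. The only cosmetic difference is that you make the affineness of $g_+$ explicit and then kill the coefficient $C$ via $C\in\Scal^{3\times 3}_-\cap\Scal^{3\times 3}_+$, whereas the paper phrases the same step as $\partial_2 g_\pm=\{0\}$ directly and concludes independence of the second variable before invoking convexity/concavity in the first.
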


\begin{proof}
If the function $f$ is of the form \eqref{polyaff}, then, clearly,  it is symmetric polyaffine. 

Assume, now, that $f$ is  symmetric polyaffine. 
 
Then, according to Theorem~\ref{theo:characterization3d}, one can find two convex functions $g_+$ and $g_-$ defined on $\Scal^{3\times 3}\times \Scal^{3\times 3}$ with $\partial_2 g_+(\eps, \cof \eps)\subset\Scal^{3\times 3}_-$ and $\partial_2 g_-(\eps, \cof \eps) \subset \Scal^{3\times 3}_-$ for any $\eps\in \Scal^{3\times 3}$ such that
\begin{align}\label{eq87}
f(\eps)=g_+(\eps,\cof\eps)=-g_-(\eps,\cof\eps), \quad \text {$\eps\in\Scal^{3\times 3}$.}
\end{align}
We conclude from Lemma \ref{lem:identical3} that $g_-=-g_+$, which implies that $\partial_2 g_{\pm}(\eps,\cof\eps)=0$ for all $\eps\in \Scal^{3\times 3}$. Hence, $g_\pm$ are independent of the cofactor variable, and $f$ has to be affine in view of~\eqref{eq87}, and thus representable as in~\eqref{polyaff}.
\end{proof}

\section*{Acknowledgements}
CK gratefully acknowledges the support by a Westerdijk Fellowship from Utrecht University. This work was initiated during research visits of CK and OB to the University of W{\"u}rzburg. The latter were funded by a DAAD travel grant. 
\color{black}


\bibliographystyle{abbrv}
\bibliography{SymmetricPolyconvexity}
\end{document}